\renewcommand{\boxed}[1]{\text{\fboxsep=.2em\fbox{\m@th$\displaystyle#1$}}}
\newcommand{\Mod}[1]{\ (\mathrm{mod}\ #1)}
\newcommand{\Z}{\mathbb{Z}}
\newcommand{\N}{\mathbb{N}}
\renewcommand{\le}{\leqslant}
\renewcommand{\ge}{\geqslant}
\newcommand{\quotient}[2]{{\raisebox{.2em}{$#1$}\left/\raisebox{-.2em}{$#2$}\right.}}
\theoremstyle{plain}
\newtheorem{thm}{Theorem}[section]
\newtheorem{lem}[thm]{Lemma}
\newtheorem{prop}[thm]{Proposition}
\newtheorem{quR}{Question}
\newtheorem*{rep@theorem}{\rep@title}
\newcommand{\newreptheorem}[2]{%
\newenvironment{rep#1}[1]{%
\def\rep@title{#2 \ref{##1}}%
\begin{rep@theorem}}%
{\end{rep@theorem}}}
\newtheorem*{rep@prop}{\rep@title}
\newcommand{\newrepprop}[2]{%
\newenvironment{rep#1}[1]{%
\def\rep@title{#2 \ref{##1}}%
\begin{rep@prop}}%
{\end{rep@prop}}}
\newtheorem*{thm*}{Theorem}
\newtheorem*{lem*}{Lemma}
\newtheorem*{prop*}{Proposition}
\newtheorem*{cor*}{Corollary}
\newtheorem*{qu*}{Question}
\newtheorem*{dt*}{Definition and Theorem}
\newtheorem*{exmp*}{Example}
\newtheorem*{exmps*}{Examples}
\newtheorem*{dprop*}{Definition and Proposition}
\newtheorem*{conj*}{Conjecture}
\theoremstyle{definition}
\newtheorem{defn}[thm]{Definition}
\newtheorem*{defn*}{Definition}
\newtheorem*{not*}{Notation}
\theoremstyle{plain}
\newtheorem{rem}[thm]{Remark}
\newtheorem*{rem*}{Remark}
\DeclareMathOperator\FSym{FSym}
\DeclareMathOperator\Alt{Alt}
\DeclareMathOperator\FAlt{FAlt}
\DeclareMathOperator\Sym{Sym}
\DeclareMathOperator\Aut{Aut}
\DeclareMathOperator\supp{supp}
\begin{document}
\title{On the spread of infinite groups}

\author{Charles Garnet Cox}
\address{School of Mathematics, University of Bristol, Bristol BS8 1UG, UK}
\email{charles.cox@bristol.ac.uk}

\thanks{}

\subjclass[2010]{20F05}

\keywords{3/2 generation, spread, infinite spread, infinite groups, Houghton groups, permutation groups}
\date{\today}
\begin{abstract}
A group is $\frac32$\emph{-generated} if every non-trivial element is part of a generating pair. In 2019, Donoven and Harper showed that many Thompson groups are $\frac32$-generated and posed five questions. The first of these is whether there exists a 2-generated group with every proper quotient cyclic that is not $\frac32$-generated. This is a natural question given the significant work in proving that no finite group has this property, but we show that there is such an infinite group. The groups we consider are a family of finite index subgroups of the Houghton group $\FSym(\Z)\rtimes\Z$. We then show that the first two groups in our family are $\frac32$-generated, and investigate the related notion of \emph{spread} for these groups. We are able to show that they have finite spread which is greater than 2. These are therefore the first infinite groups to be shown to have finite positive spread, and the first to be shown to have spread greater than 2 (other than $\Z$ and the Tarski monsters, which have infinite spread).
\end{abstract}
\maketitle
\section{Introduction}
A group $G$ is $\frac32$\emph{-generated} if for every non-trivial $g\in G$ there exists an $h\in G$ such that $\langle g, h\rangle=G$. This name was chosen because the property is stronger than being $2$-generated, but is weaker than being cyclic. A natural generalisation of this is the notion of spread, introduced in \cite{spread}. The \emph{spread} of a group $G$, denoted by $s(G)$, is the supremum of those $k\in \N\cup\{0\}$ such that, for every $g_1, \ldots, g_k \in G\setminus\{1\}$, there exists an $h\in G$ with
$$\langle g_1, h\rangle=\ldots=\langle g_k, h\rangle=G.$$
This notion has attracted significant attention for finite groups, e.g.\ \cite{spread, spread2, nospread1, H, J, C, D}. But the first paper on spread for infinite groups, \cite{first}, only recently appeared. They showed that the Higman-Thompson and Brin-Thompson groups are $\frac32$-generated i.e.\ have spread at least 1. They also posed many interesting questions, which we investigate here. The following lemma is a good place to start.

\begin{lem*} If $G$ is $\frac32$-generated, then every proper quotient of $G$ is cyclic.
\end{lem*}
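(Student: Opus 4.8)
The plan is to prove the contrapositive: if $G$ has a proper quotient that is not cyclic, then $G$ is not $\frac32$-generated. So suppose $N \trianglelefteq G$ with $N \neq \{1\}$ and $G/N$ non-cyclic. First I would observe that it suffices to exhibit a single non-trivial element $g \in G$ that lies in no generating pair of $G$. The natural candidate is to pick any non-trivial $g \in N$; the claim is that for every $h \in G$, $\langle g, h \rangle \neq G$.

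The key step is the following: if $\langle g, h \rangle = G$, then passing to the quotient map $\pi\colon G \to G/N$, we get $\langle \pi(g), \pi(h) \rangle = \pi(G) = G/N$. But $\pi(g) = 1$ since $g \in N$, so this forces $\langle \pi(h) \rangle = G/N$, i.e. $G/N$ is cyclic, contradicting our hypothesis. Hence no such $h$ exists, so $g$ is a non-trivial element with no generating partner, and $G$ fails to be $\frac32$-generated. The only mild subtlety worth spelling out is that a quotient of $G$ by a non-trivial normal subgroup is a \emph{proper} quotient in the sense intended (i.e.\ not all of $G$, when we also recall that $G$ itself being cyclic is excluded from being a ``proper'' quotient — or rather, the statement should be read as: every quotient $G/N$ with $N \neq \{1\}$ is cyclic), so the hypothesis genuinely applies to $G/N$.

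There is essentially no main obstacle here — the argument is a one-line application of the fact that generation is preserved under surjections, combined with the observation that elements of the kernel become trivial. The only thing to be careful about is the degenerate reading of ``proper quotient'': if one insists $N \neq G$ as well, the argument is unaffected since we never needed $G/N$ to be non-trivial, only non-cyclic. I would therefore present this as a short direct proof: take $1 \neq g \in N$, suppose for contradiction $\langle g, h\rangle = G$ for some $h$, apply $\pi$, and conclude $G/N = \langle \pi(h)\rangle$ is cyclic.
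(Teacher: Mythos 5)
Your argument is correct and is exactly the paper's proof, just written out in more detail: the paper likewise picks a non-trivial element $h$ of a normal subgroup $H$ with $G/H$ non-cyclic and notes that $\langle g,h\rangle\ne G$ for all $g$, since otherwise the quotient would be generated by the image of a single element. No differences worth noting.
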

\begin{proof} Let $H\lhd G$ be chosen so that the quotient of $G$ by $H$ is not cyclic, and let $h\in H\setminus\{1\}$. Then $\langle g, h\rangle\ne G$ for every $g\in G$.
\end{proof}

In the case of finite groups there has been an exceptional effort \cite{F, J, G, I} to prove the converse of this statement, which was first conjectured in \cite{nospread1} and eventually completed in \cite{J}. For infinite groups the converse to the above lemma is false for a trivial reason: the existence of infinite simple groups that are not 2-generated. Note that there exist finitely generated, but not $2$-generated, infinite simple groups, as shown in \cite{guba}. This leads us to the first question raised in \cite{first}. 

\begin{quR} Is there a 2-generated group with no noncyclic proper quotients that is not $\frac32$-generated?
\end{quR}

The answer to this is yes, as indicated by our first theorem. The groups considered are all finite index in the second Houghton group. This is the group $\FSym(\Z)\rtimes\langle t\rangle$, where $\FSym(\Z)$ consists of all bijections of $\Z$ which move only finitely many points, and $t$ is the translation map sending $z$ to $z+1$ for all $z\in \Z$. Then $\FSym(\Z)$ contains $\Alt(\Z)$, which consists of exactly those permutations which are even. We  consider, for $k\in \N$, the groups $G_k:=\langle \Alt(\Z), t^k\rangle$. These groups were introduced in \cite{Hou2}. Their automorphism group structure was described in \cite{Cox1}.

\begin{reptheorem}{question1} The groups $G_k:=\langle \Alt(\Z), t^k\rangle$, indexed over the naturals, satisfy:
\begin{enumerate}[i)]
\item for every $k \in \N$, the group $G_k$ is 2-generated;
\item for every $k \in \N$, every proper quotient of $G_k$ is cyclic; but
\item for every $k \in \{3, 4, \ldots\}$, the group $G_k$ is not $\frac32$-generated i.e.\ $s(G_k)=0$.  
\end{enumerate}
\end{reptheorem} 
For (i), we make a careful choice of elements to generate each $G_k$. Although the groups appearing in Theorem \ref{question1} are not simple, a short argument shows that they have $\Alt(\Z)$ as a unique minimal normal subgroup. This ensures that (ii) holds. Then (iii) easily follows from $(1\;2\;3)\in G_k$  not being part of a generating pair. We then show that these groups are in some sense `not far' from having infinite spread, with the obstruction coming from the finite order elements.
 
\begin{reptheorem}{almostinfinitespread} Let $k, n \in \N$ and $g_1, \ldots, g_n \in G_k\setminus \Alt(\Z)$. Then there exists an $h\in [t^k]\subset G_k$ such that $\langle g_1, h\rangle=\ldots=\langle g_n, h\rangle=G_k$.
\end{reptheorem} 
We then investigate the other questions of \cite{first}, which require additional notions. Given a group $G$, let us say that the \emph{spread on} $X\subseteq G$ is the supremum of those $k\in \N\cup\{0\}$ such that for any $g_1, \ldots, g_k\in G$ there exists an $h \in X$ so that
\begin{equation}
\langle g_1, h\rangle=\ldots=\langle g_k, h\rangle=G.
\end{equation}

 Although the author is unaware of it having been used, the notation $s_X(G)$ appears natural in this context. If there is a set $X$ such that $s_X(G)\ge 1$, then $X$ is called a \emph{total dominating set} of $G$. We can then define the \emph{uniform spread} of $G$ as the supremum of $\{s_C(G)\;:\;C$ a conjugacy class of $G\}$, and denote this by $u(G)$. The concept of uniform spread was introduced in \cite{nospread1}; both \cite{L} and \cite{H} provide further interesting results for finite groups. Note that $u(G)\le s(G)$ by definition. 
\begin{repprop}{nofinitedomset} No finite total dominating set exists for $G_1$ or $G_2$. 
\end{repprop}
 Since all of the elements we use for $G_1$ lie in $[t]$ and the elements we use for $G_2$ lie in $[t^2]$, our computations for the spread of $G_1$ and $G_2$ provide information about the uniform spread of these groups. Note that \cite{J} also showed that no finite group $G$ has $s(G)=1$, which was open from 1975 having been discussed in \cite{spread}. Deciding whether this is also the case for infinite groups is \cite[Question 2]{first}, but we see that $G_1$ and $G_2$ do not resolve this question.
\begin{reptheorem}{spreadofG1G2}  Let $G_1=\langle (1\;2\;3), t\rangle$ and $G_2=\langle (1\;2\;3), t^2\rangle$. Then $u(G_1)\ge 2$, $u(G_2)\ge 2$, $s(G_1)\le 9$, and $s(G_2)\le 9$.

\end{reptheorem}
Note that $G_1$ and $G_2$ are the first infinite groups known to have finite positive spread, and the first to be shown to have spread greater than 2 (other than $\Z$ and the Tarski monsters, which have infinite spread). Also, given any $k\in \{2, 3, \ldots\}$, note that $G_{2k}$ is index $k$ in $G_2$ but $s(G_2)>0$ whilst $s(G_{2k})=0$.

We thank Benjamin Barrett, Scott Harper, and Jeremy Rickard for helpful conversations regarding the proof of the following lemma, which significantly narrows the search for $\frac32$-generated groups. Harry Petyt has also pointed out that the residually finite condition can be weakened (by not requiring the quotient to be finite). 
\begin{lem} \label{notrf} Let $G$ be infinite, residually-finite, and have every proper quotient cyclic. Then $G=\Z$.
\end{lem}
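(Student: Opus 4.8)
The plan is to show the hypotheses force $G$ to be abelian, and then that an infinite, finitely generated abelian group all of whose proper quotients are cyclic must be $\Z$.

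\emph{Step 1: $G$ is abelian.} The key observation is that if every proper quotient of $G$ is cyclic then $[G,G]$ is contained in \emph{every} proper normal subgroup of $G$: for $N\lhd G$ with $N\ne G$, the quotient $G/N$ is cyclic, hence abelian, and therefore $[G,G]\le N$. Now use residual finiteness: for each $1\ne g\in G$ there is a normal subgroup $N_g$ of finite (hence proper) index with $g\notin N_g$, and $\bigcap_{g} N_g=1$. Since $[G,G]\le N_g$ for every $g$, we conclude $[G,G]=1$, so $G$ is abelian. (In particular $G$ is not perfect, which would otherwise be the one case requiring care.)

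\emph{Step 2: identifying $G$.} Now $G$ is abelian. If $G$ is not cyclic, pick $1\ne g\in G$; then $\langle g\rangle$ is proper, $G/\langle g\rangle$ is cyclic, and lifting a generator gives $G=\langle g,h\rangle$. So in every case $G$ is finitely generated abelian, $G\cong\Z^r\oplus T$ with $T$ finite, and $r\ge 1$ because $G$ is infinite. Suppose $r\ge 2$ or $T\ne 1$, and choose a prime $p$ (dividing $|T|$ in the second case). Then $pG$ contains $p\Z^r\ne 0$ and so is nontrivial, while $G/pG\cong(\Z/p\Z)^r\oplus T/pT$ is nontrivial, so $pG$ is also proper; but this quotient contains a copy of $(\Z/p\Z)^2$, hence is a noncyclic proper quotient of $G$ — contradicting the hypothesis. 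Therefore $r=1$ and $T=1$, i.e.\ $G\cong\Z$.

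I do not anticipate a real obstacle: everything hinges on the single observation of Step 1, that cyclic (hence abelian) proper quotients force $[G,G]$ into every proper normal subgroup, after which residual finiteness immediately yields commutativity and the rest is elementary finitely generated abelian group theory. The only points needing a moment's care are the degenerate cases — checking that an infinite residually finite group does have a proper finite-index normal subgroup, and noting that it is precisely the infiniteness of $G$ (via $pG\ne 1$) that blocks finite counterexamples such as $\Z/2\Z\oplus\Z/2\Z$.
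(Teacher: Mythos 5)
Your proof is correct and follows essentially the same route as the paper: commutativity from residual finiteness (every finite-index normal subgroup avoiding a given nontrivial element is a nontrivial proper normal subgroup, so the quotient is cyclic hence abelian), finite generation by lifting a generator of $G/\langle g\rangle$, and then the structure theorem for finitely generated abelian groups; your final step via $G/pG$ just makes explicit what the paper dismisses as clear. One small caution: ``proper quotient'' here means $G/N$ with $N\ne 1$ (not $N\ne G$), so your Step 1 observation should be stated for nontrivial normal subgroups --- this costs nothing, since each $N_g$ has finite index in the infinite group $G$ and is therefore automatically nontrivial.
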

\begin{proof} Our proof involves three parts: that the group is abelian; that the group is finitely generated; and that it must be $\Z$.

Assume there exist $a, b \in G$ such that $[a, b]$, the commutator of $a$ and $b$, is non-trivial. Using that $G$ is residually finite, there must exist a non-trivial normal subgroup $N$ of $G$ with $[a, b]\not\in N$. But then the images of $a$ and $b$ in the quotient of $G$ by $N$ must be non-trivial, and $G/N$ is non-abelian and so not cyclic.

Now let $x \in G\setminus\{1\}$. Then $G/\langle x\rangle$ is cyclic, and so is generated by some $y$. Taking a preimage $y'$ of $y$ implies that $G=\langle x, y'\rangle$. Thus $G$ is finitely generated.

Since $G$ is a finitely generated abelian group, $G=A\times \Z^n$ for some finite abelian group $A$ and $n \in \N\cup\{0\}$. If $n=0$, then $G$ is finite. Finally, using the condition on quotients, it is clear that $n=1$ and $A$ must be trivial.
\end{proof}

\vspace{0.1cm}
\noindent\textbf{Acknowledgements.} I thank Scott Harper of the University of Bristol for his illuminating talks, ever-keen interest to discuss group theory, and comments on this work. I also thank both him and Casey Donoven for the interesting questions they raised in \cite{first}. I thank the anonymous referee for their comments. I thank François Le Maître, Université de Paris, for his improvements to Lemma \ref{lemG1ub}. Finally, I thank Tim Burness of the University of Bristol for his support and encouragement.

\vspace{0.3cm}
\noindent\textbf{Competing interests.} The author is employed by the University of Bristol.

\section{The Houghton groups}
For each $n\in \N$, the Houghton group $H_n$ acts on the set $X_n:=\{1, \ldots, n\}\times \N$ in a way that is `eventually a translation'. This paper deals exclusively with subgroups of $H_2$, and so it is this group - and a particularly nice representation of it - that we introduce here. For a more in depth introduction see, for example, \cite{Cox1}.

\begin{defn} Given $X\ne \emptyset$, let $\Sym(X)$ denote the group of all bijections on $X$. For any $g\in \Sym(X)$ we then define $\supp(g):=\{x\in X : (x)g\ne x\}$, which is often called the \emph{support} of $g$. Let $\FSym(X)$ consist of those $g\in \Sym(X)$ with $|\supp(g)|<\infty$, and let $\Alt(X)\le\FSym(X)$ consist of all even permutations.
\end{defn}
Note that, if $|X|=n$, then $\Sym(X)=\FSym(X)\cong S_n$ and $\Alt(X)\cong A_n$.
\begin{not*} Let $t\in \Sym(\Z)$ be defined as $t: z\mapsto z+1$ for all $z \in \Z$.
\end{not*}
\begin{defn} The group $H_2$, sometimes called \emph{the second Houghton group}, is the group $\FSym(\Z)\rtimes\langle t\rangle$.
\end{defn}
A first important observation is that conjugacy in this group, being a subgroup of $\Sym(\Z)$, is easy to understand: conjugation by $g\in \Sym(\Z)$ changes the support exactly by how it acts on $\Z$ e.g.\ $g^{-1}(a\;b\;c)g=((a)g\;(b)g\;(c)g)$. A commonly used generating set is then $\{(0\;1), t\}$. From \cite{sapir}, $H_2$ has exponential conjugacy growth.

To add clarity to our arguments, we also remark here that elements in $H_2$ do indeed `eventually act as a translation', meaning that their infinite orbits have a simple structure. The following is based on \cite[Prop.\ 2.6]{ConjHou}.
\begin{lem}\label{orbits} Let $g\in H_2$ and $z\in \Z$. If $\{(z)g^m\;:\;m\in\Z\}$ is infinite, then it differs only on finitely many points from a set of the form $$\{x\in \Z\setminus\N\;:\;x\equiv r_1 \Mod{d_1}\}\cup\{x\in\N\;:\;x\equiv r_2 \Mod{d_2}\}$$ for some $d_1, d_2, r_1, r_2\in\N$.
\end{lem}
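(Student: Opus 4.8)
The plan is to analyze the structure of an element $g \in H_2 = \FSym(\Z) \rtimes \langle t \rangle$ by writing it uniquely as $g = f t^k$ with $f \in \FSym(\Z)$ and $k \in \Z$, and then tracking the forward and backward orbit of a point $z$ under the assumption that this orbit is infinite. The key observation is that $\supp(f)$ is finite, so outside a bounded window $W = [-M, M]$ (chosen large enough to contain $\supp(f)$ and the starting point $z$) the map $g$ acts exactly as the translation $x \mapsto x + k$. Note first that if $k = 0$ then $g = f$ has finite support, so every orbit is finite; hence the hypothesis forces $k \neq 0$. By replacing $g$ with $g^{-1}$ if necessary (which reverses the orbit but does not change the set), we may assume $k \geq 1$.

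First I would show that the forward orbit $\{(z)g^m : m \geq 0\}$ eventually falls into an arithmetic progression. Since $W$ is finite, after finitely many steps the orbit point exceeds $M$; once a point $y > M$ is reached, each subsequent application of $g$ adds exactly $k$ (the point stays above $M$ forever since $k \geq 1$), so from that stage on the orbit is $\{y + jk : j \geq 0\}$, which is $\{x \in \N : x \equiv r_2 \Mod{d_2}\}$ with $d_2 = k$ and $r_2$ the residue of $y$, up to the finitely many initial terms and finitely many small positive integers. Symmetrically, I would run the orbit backwards using $g^{-1} = t^{-k} f^{-1}$: again outside a finite window $g^{-1}$ acts as $x \mapsto x - k$, so once the backward orbit drops below $-M$ it proceeds by subtracting $k$ each time, giving a progression $\{x \in \Z \setminus \N : x \equiv r_1 \Mod{d_1}\}$ with $d_1 = k$, again up to finitely many exceptional points. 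Taking the union of the (eventually periodic) forward tail, the (eventually periodic) backward tail, and the finitely many points of the orbit lying inside $W$ yields a set of the stated form (one can even take $d_1 = d_2 = k$), differing from the true orbit only on a finite set.

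The step I expect to require the most care is making the bookkeeping rigorous: one must check that the orbit genuinely \emph{escapes} the window $W$ in both the forward and backward directions rather than, say, cycling forever inside it — but this is exactly where infiniteness of the orbit is used, since an orbit confined to the finite set $W$ would be finite. A minor subtlety is that the forward tail and backward tail might a priori overlap with each other or with $W$ in complicated ways, and that the progression obtained for negative $x$ should be expressed with a residue $r_1 \in \N$ and modulus $d_1 \in \N$ as in the statement; both are handled by absorbing any finite discrepancy into the ``differs only on finitely many points'' clause. Everything else is the routine verification that a point staying above $M$ (resp.\ below $-M$) has its $g$-image (resp.\ $g^{-1}$-image) given by $+k$ (resp.\ $-k$), which follows immediately from $\supp(f) \subseteq W$.
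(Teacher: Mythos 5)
Your proposal is correct and follows essentially the same route as the paper: write $g = ft^k$ with $f \in \FSym(\Z)$, observe that $g$ acts as translation by $k$ outside the finite window containing $\supp(f)$, and track the forward and backward tails of the orbit to get arithmetic progressions with $d_1 = d_2 = |k|$, absorbing everything else into the finite discrepancy. Your version is in fact more careful than the paper's (which disposes of the escape-from-the-window issue in one sentence), and the subtleties you flag are exactly the right ones and are resolved as you indicate.
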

\begin{proof} Note that $g=\sigma t^c$ for some $\sigma \in \FSym(\Z)$ and $c\in \Z$. Thus, if $x\not\in\supp(\sigma)$, then the result follows by considering $\{(x)g^m\;:\;m\in\N\}$ and $\{(x)g^{-m}\;:\;m\in\N\}$. Then $d_1$ and $d_2$ are both equal to $c$ in this case. Now, if $y\in \supp(\sigma)$, either $y$ lies on a finite orbit or one of the infinite orbits already described.
\end{proof}
\begin{rem}\label{orbitobs} Lemma \ref{orbits} means that for each $g\in H_2$ there are numbers $k\in \Z$ and $z\in \N$ such that $(x)g=x+k$ for every $x\in \Z\setminus\{-z, \ldots, z\}$.
\end{rem}

\section{On a family of two generated subgroups of $H_2$}
The groups we will use were introduced in \cite{Hou2} and are all finite index in $H_2$.
\begin{defn} \label{thegroups} For each $k \in \N$, let $G_k:=\langle \Alt(\Z), t^k\rangle$.
\end{defn}
It follows from \cite[Lem. 5.9]{Cox1} and \cite[Lem. 2.4]{Cox2} that $\Aut(G_k)\cong H_{2k}\rtimes(S_k\wr C_2)$, where the elements in $S_k\wr C_2$ relate to isometrically permuting the $2k$ rays of $X_{2k}$. This means that $G_m\cong G_n$ only if $m=n$.
\begin{rem} Since $\Alt(\Z)$ is an infinite simple group, $G_k$ is not residually finite.
\end{rem}
The following allows us to see that the groups $G_k$ have the property on proper quotients that we desire.
\begin{lem} \cite[Prop. 2.5]{Cox2} Let $\Alt(\Z)\le G\le \Sym(\Z)$. Then $G$ has $\Alt(\Z)$ as a unique minimal normal subgroup.
\end{lem}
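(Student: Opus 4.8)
The plan is to reduce everything to two facts about $\Alt(\Z)$: that it is normal in $G$, and that every nontrivial normal subgroup of $G$ contains it (the latter forces $\Alt(\Z)$ to be the unique minimal normal subgroup, since $\Alt(\Z)\ne 1$). Normality is immediate: conjugation in $\Sym(\Z)$ relabels supports, so it carries a product of $r$ transpositions to a product of $r$ transpositions and hence preserves parity; thus $\Alt(\Z)\lhd\Sym(\Z)$, and a fortiori $\Alt(\Z)\lhd G$.

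For the containment, I would take an arbitrary nontrivial normal subgroup $N$ of $G$ and consider $N\cap\Alt(\Z)$, which is normal in $G$ and contained in $\Alt(\Z)$, hence normal in $\Alt(\Z)$. Invoking the classical fact that the finitary alternating group on an infinite set is simple, this intersection is either all of $\Alt(\Z)$ — in which case $\Alt(\Z)\le N$ and we are done — or trivial. In the trivial case, since $N$ and $\Alt(\Z)$ are both normal in $G$ we have $[N,\Alt(\Z)]\le N\cap\Alt(\Z)=1$, so $N$ lies in the centraliser of $\Alt(\Z)$ in $\Sym(\Z)$. It therefore suffices to show this centraliser is trivial, as that contradicts $N\ne 1$ and rules out the trivial case.

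To see the centraliser is trivial, suppose $g\in\Sym(\Z)$ commutes with every element of $\Alt(\Z)$ but $(a)g=a'\ne a$ for some $a\in\Z$. As $\Z$ is infinite, pick distinct $b,c\in\Z\setminus\{a,a'\}$ and set $\sigma=(a\;b\;c)\in\Alt(\Z)$; then $g^{-1}\sigma g=((a)g\;(b)g\;(c)g)$ must equal $\sigma$, yet $(a)g=a'\notin\{a,b,c\}$, a contradiction, so $g=\id$. The only non-formal inputs here are the simplicity of $\Alt(\Z)$ and this triviality of its centraliser in $\Sym(\Z)$, the latter being the one genuine computation; everything else is routine manipulation of normal subgroups, so I anticipate no real obstacle.
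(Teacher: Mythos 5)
Your argument is correct and complete: normality of $\Alt(\Z)$, the simplicity of the finitary alternating group applied to $N\cap\Alt(\Z)$, and the triviality of $C_{\Sym(\Z)}(\Alt(\Z))$ (via conjugation of $3$-cycles) together give exactly the stated conclusion. The paper itself offers no proof of this lemma, citing it as \cite[Prop.~2.5]{Cox2}; your argument is the standard one for that result, so there is nothing to reconcile.
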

\begin{lem} \label{cyclicquotients} Let $k\in \N$. Then every proper quotient of $G_k$ is cyclic.
\end{lem}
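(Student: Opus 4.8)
The plan is to push everything onto the single distinguished normal subgroup $\Alt(\Z)$. Concretely, I would first observe that $\Alt(\Z)\lhd G_k$ (conjugation in $\Sym(\Z)$ preserves cycle type, hence parity), and that $G_k/\Alt(\Z)$ is cyclic — indeed it is generated by the image of $t^k$, and since $t^{km}\in\FSym(\Z)$ only when $m=0$, that image has infinite order, so $G_k/\Alt(\Z)\cong\Z$. Granting this, any proper quotient $G_k/N$ with $\Alt(\Z)\le N$ is a quotient of $\Z$ and hence cyclic, so the whole statement reduces to the claim that $\Alt(\Z)\le N$ for \emph{every} nontrivial normal subgroup $N\lhd G_k$.

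To prove that claim, fix $1\ne N\lhd G_k$ and consider $N\cap\Alt(\Z)$, which is normal in the simple group $\Alt(\Z)$ and therefore equals either $\Alt(\Z)$ (and then we are done) or $\{1\}$. In the second case, since $N$ and $\Alt(\Z)$ are both normal in $G_k$ and meet trivially, we get $[N,\Alt(\Z)]\le N\cap\Alt(\Z)=\{1\}$, i.e.\ $N\le C_{G_k}(\Alt(\Z))\le C_{\Sym(\Z)}(\Alt(\Z))$. So it suffices to know that $\Alt(\Z)$ is self-centralising in $\Sym(\Z)$: if $g\ne\id$ centralised $\Alt(\Z)$, pick $x$ with $(x)g\ne x$ and two further distinct points $u,v\notin\{x,(x)g\}$; then $g^{-1}(x\;u\;v)g=((x)g\;(u)g\;(v)g)$ cannot equal $(x\;u\;v)$, since the first entry $(x)g$ lies outside $\{x,u,v\}$. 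Hence $C_{\Sym(\Z)}(\Alt(\Z))=\{1\}$, forcing $N=\{1\}$, a contradiction. Combining the two cases, $\Alt(\Z)\le N$ as required, and $G_k/N$ is cyclic.

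I expect the only real point of substance to be the centraliser computation in the last case — that is where the hypothesis ``$\Alt(\Z)$ is a minimal normal subgroup'' by itself is not quite enough (in an infinite group a nontrivial normal subgroup need not contain a minimal one, e.g.\ it could be infinite cyclic), so one must genuinely rule out $N\le C_{G_k}(\Alt(\Z))$; everything else is bookkeeping. One could alternatively invoke the preceding lemma of \cite{Cox2}, whose proof already yields that $\Alt(\Z)$ is contained in every nontrivial normal subgroup of $G_k$, and then only the computation of $G_k/\Alt(\Z)\cong\Z$ remains.
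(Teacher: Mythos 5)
Your proof is correct and follows essentially the same route as the paper: reduce to the fact that $G_k/\Alt(\Z)\cong\Z$ together with the claim that every non-trivial normal subgroup of $G_k$ contains $\Alt(\Z)$. The only difference is that the paper imports that containment from \cite[Prop.\ 2.5]{Cox2}, whereas you prove it directly from the simplicity of $\Alt(\Z)$ and the computation $C_{\Sym(\Z)}(\Alt(\Z))=\{1\}$; your remark that the bare phrase ``unique minimal normal subgroup'' does not by itself yield this containment for infinite groups is a fair one, and your centraliser argument correctly fills that gap.
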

\begin{proof}
Let $N$ be a non-trivial normal subgroup of $G_k$. Then $\Alt(\Z)\le N$, and so
\[\quotient{G_k}{N}\cong\quotient{\left(\quotient{G_k}{\Alt(\Z)}\right)}{\left(\quotient{N}{\Alt(\Z)}\right)}\]
i.e.\ $\quotient{G_k}{N}$ is a quotient of $\quotient{G_k}{\Alt(\Z)}\cong \langle t^k\rangle$, and hence cyclic.
\end{proof}
We now show, for each $k\in\N$, that the group $G_k$ is $2$-generated.
\begin{rem} Computations show that $G_1=\langle (1\;2\;3), t\rangle$ and  $G_2=\langle (1\;2\;3), t^2\rangle$.
\end{rem}
The following result is essentially \cite[Lem. 3.2]{invgenhou}.
\begin{not*} For any $k\in \N$, let $\Omega_k=\{1, 2, \ldots, k\}$.
\end{not*}
\begin{lem} Let $k\in \{3, 4, \ldots\}$. Then $G_k$ is generated by $\Alt(\Omega_{2k}) \cup\{t^k\}$.
\end{lem}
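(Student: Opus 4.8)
The plan is to prove the equivalent statement $\Alt(\Z)\le H$, where $H:=\langle \Alt(\Omega_{2k})\cup\{t^k\}\rangle$; since $t^k\in H$ and $H\le G_k$ trivially, this forces $H=G_k$. Write $W_m:=\{mk+1,mk+2,\ldots,mk+2k\}$ for $m\in\Z$, so that $\Omega_{2k}=W_0$. Because conjugating by a permutation moves supports accordingly (e.g.\ $g^{-1}(a\;b\;c)g=((a)g\;(b)g\;(c)g)$), conjugating $\Alt(W_0)\le H$ by $(t^k)^m=t^{mk}\in H$ shows $\Alt(W_m)\le H$ for every $m\in\Z$. Two elementary facts about the windows then do the rest: consecutive windows overlap in $W_m\cap W_{m+1}=\{mk+k+1,\ldots,mk+2k\}$, which has exactly $k$ elements, and $\bigcup_{m\in\Z}W_m=\Z$.

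The engine is the classical amalgamation fact: if $A$ and $B$ are finite sets with $|A\cap B|\ge 2$, then $\langle\Alt(A),\Alt(B)\rangle=\Alt(A\cup B)$. One inclusion is immediate; for the other, fix distinct $p,q\in A\cap B$ and use the standard fact that the $3$-cycles $(p\;q\;u)$, as $u$ ranges over $(A\cup B)\setminus\{p,q\}$, generate $\Alt(A\cup B)$. Each such $3$-cycle has all three of its moved points in $A$ (when $u\in A$) or all three in $B$ (when $u\in B$), hence lies in $\langle\Alt(A),\Alt(B)\rangle$. Iterating along a chain $A_1,\ldots,A_r$ of finite sets with $|A_i\cap A_{i+1}|\ge 2$ for all $i$ gives $\langle\Alt(A_1),\ldots,\Alt(A_r)\rangle=\Alt(A_1\cup\cdots\cup A_r)$.

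Applying this to $W_{m_1},W_{m_1+1},\ldots,W_{m_2}$, whose consecutive members meet in $k\ge 2$ points and whose union is the interval $\{m_1k+1,\ldots,m_2k+2k\}$ (each window runs $2k$ steps from its start while the next starts only $k$ steps later, so there is no gap), we get $\Alt(\{m_1k+1,\ldots,m_2k+2k\})\le H$ for all $m_1\le m_2$. Given $n\in\N$, taking $m_1$ sufficiently negative and $m_2$ sufficiently large yields $\{-n,\ldots,n\}\subseteq\{m_1k+1,\ldots,m_2k+2k\}$, hence $\Alt(\{-n,\ldots,n\})\le H$. Since every $3$-cycle of $\Sym(\Z)$ is supported on some such interval and these $3$-cycles generate $\Alt(\Z)$, we conclude $\Alt(\Z)\le H$, and therefore $H=G_k$.

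I do not expect a genuinely hard step: the whole argument is bookkeeping with overlapping intervals, layered on the classical generation of $\Alt(A\cup B)$ by $\Alt(A)$ and $\Alt(B)$ when $|A\cap B|\ge 2$. The only inequality carrying real weight is $|W_m\cap W_{m+1}|=k\ge 2$; it fails exactly for $k=1$, where $\Alt(\Omega_2)$ is trivial and $\langle\Alt(\Omega_2),t\rangle=\langle t\rangle\ne G_1$, so restricting to $k\ge 3$ loses nothing — and in any case $G_1$ and $G_2$ are covered by the explicit generating pairs recorded just above.
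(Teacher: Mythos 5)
Your proof is correct. It takes a mildly different route from the paper's: both arguments hinge on conjugating the finite alternating group by powers of $t^k$ to translate it along $\Z$, but where you invoke the general amalgamation fact that $\langle\Alt(A),\Alt(B)\rangle=\Alt(A\cup B)$ whenever $|A\cap B|\ge 2$ and apply it to the chain of overlapping windows $W_m=\Omega_{2k}t^{mk}$, the paper instead reduces to showing that each $3$-cycle $(1\;2\;m)$ with $m\ge 3$ lies in the group (these generate $\Alt(\N)$, and every element of $\Alt(\Z)$ is a $t^{k}$-conjugate of something supported in $\N$), and then produces $(1\;2\;(n+1)k+r)$ from $(1\;2\;nk+r)$ by an explicit conjugation with the translated $3$-cycle $(nk+r\;\,nk+r+1\;\,(n+1)k+r)$. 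Your version is more structural and, as you note, only needs the overlap $|W_m\cap W_{m+1}|=k$ to be at least $2$, so it would cover $k=2$ as well; the paper's induction is more computational but produces the generating $3$-cycles explicitly, which is in keeping with the hands-on style used elsewhere in the paper. All the details you rely on check out: $\Alt(W_m)=t^{-mk}\Alt(\Omega_{2k})t^{mk}\le H$, consecutive windows meet in exactly $k$ points, and the standard generation of $\Alt(A\cup B)$ by the $3$-cycles $(p\;q\;u)$ with $p,q\in A\cap B$ fixed is applied correctly.
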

\begin{proof} We will show that all $3$-cycles in $\Alt(\Z)$ lie in $\langle \Alt(\Omega_{2k}), t^k\rangle$. We only need to construct those $3$-cycles with support in $\N$, since for any $\sigma \in \Alt(\Z)$ there is an $n \in \N$ such that $\supp(t^{-kn}\sigma t^{kn})\subset \N$. Moreover, $\{(1\;2\;m)\;:\;m\ge 3\}$ generates $\Alt(\N)$ since for any distinct $m_1, m_2, m_3\in \N$ we have $(1\;2\;m_1)(1\;2\;m_2)^{-1}=(2\;m_1\;m_2)$ and $(1\;2\;m_3)^{-1}(2\;m_1\;m_2)(1\;2\;m_3)=(m_1\;m_2\;m_3)$.

By definition, $(1\;2\;m)\in \Alt(\Omega_{2k})$ for $m=3, \ldots, 2k$. To use induction on $n$, let us assume, for each $r\in \{1, \ldots, k\}$, that $(1, 2, nk+r)\in \langle \Alt(\Omega_{2k}), t^k\rangle$. Thus we want to show that $\langle \Alt(\Omega_{2k}), t^k\rangle$ also contains $(1, 2, (n+1)k+r)$. Clearly $(r, r+1, r+k)\in \langle\Omega_{2k}, t^k\rangle$. Let
$$\omega:=t^{-nk}(r, r+1, r+k)t^{nk}=(nk+r, nk+r+1, (n+1)k+r).$$
Conjugating $(1, 2, nk+r)$ by $\omega^{-1}$ yields the required element $(1, 2, (n+1)k+r)$.
\end{proof}

\begin{lem}\label{2gen} Let $k \in \{3, 4, \ldots\}$. Then $G_k$ is 2-generated.
\end{lem}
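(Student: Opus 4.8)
The plan is to exhibit, for each $k\ge 3$, an explicit generating pair for $G_k$. I would first record a reduction. Since $G_k=\Alt(\Z)\rtimes\langle t^k\rangle$, the quotient $G_k/\Alt(\Z)$ is infinite cyclic, generated by the image of $t^k$. Hence it suffices to find $x,y\in G_k$ such that (a) $y\in\Alt(\Z)\,t^k$, so that the image of $y$ generates $G_k/\Alt(\Z)$, and (b) $\Alt(\Z)\subseteq\langle x,y\rangle$; for then $\langle x,y\rangle\supseteq\langle\Alt(\Z),y\rangle=G_k$. (Using the previous lemma one could instead aim for $\Alt(\Omega_{2k})\subseteq\langle x,y\rangle$ together with $t^k\in\langle x,y\rangle$, but the formulation above is cleaner.) Condition (a) is automatic for $y=\sigma t^k$ with $\sigma\in\Alt(\Z)$, so all the work goes into (b).

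For the concrete choice I would take $x=(1\;2\;3)$ and $y=\sigma t^k$, where $\sigma\in\Alt(\Z)$ is chosen so that on a long interval $I=\{A,\ldots,B\}\supseteq\{1,2,3\}$ it restricts to the twisted translation $i\mapsto i+1-k$; this forces $\sigma$ to move the $k-1$ points immediately left of $I$ as well, so that $\supp(\sigma)=\{A+1-k,\ldots,B\}$, and one adjusts $\sigma$ by a transposition disjoint from $I$, if necessary, to make it even. The point is that $y$ then acts on $I$ as the genuine translation $i\mapsto i+1$, so conjugating $x$ by powers of $y$ literally marches a $3$-cycle along $I$: one has $y^{-j}(1\;2\;3)y^{j}=(1+j,\,2+j,\,3+j)$ and $y^{j}(1\;2\;3)y^{-j}=(1-j,\,2-j,\,3-j)$ for $j$ in the appropriate ranges. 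Since the consecutive $3$-cycles $(i,\,i+1,\,i+2)$ with $i$ running over an interval generate the alternating group on that interval, this gives $\Alt(W)\subseteq\langle x,y\rangle$ for an interval $W$ essentially equal to $I$; a few further conjugate-and-amalgamate steps — using the standard fact that $\langle\Alt(U),\Alt(V)\rangle=\Alt(U\cup V)$ whenever $|U\cap V|\ge 3$ — enlarge $W$ until it contains $\supp(\sigma)\cup(\supp(\sigma)+k)$.

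The argument then bootstraps to all of $\Alt(\Z)$. Because $\supp(\sigma)\subseteq W$ one has $y^{-1}\Alt(W)\,y=t^{-k}\sigma^{-1}\Alt(W)\sigma\,t^{k}=t^{-k}\Alt(W)\,t^{k}=\Alt(W+k)$, and similarly $y\,\Alt(W)\,y^{-1}=\Alt(W-k)$ as soon as $\supp(\sigma)+k\subseteq W$. Amalgamating $\Alt(W)$ with $\Alt(W\pm k)$ extends $W$ by $k$ on each side while it still contains $\supp(\sigma)$ with the required margin, so the step repeats; iterating, $\langle x,y\rangle$ contains $\Alt(F)$ for every finite $F\subseteq\Z$, i.e.\ $\Alt(\Z)\subseteq\langle x,y\rangle$. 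With (a) and (b) in hand, $\langle x,y\rangle=G_k$.

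I expect the main obstacle to be the base case: producing the initial window $W$ with $\Alt(W)\subseteq\langle x,y\rangle$ that is wide enough to surround $\supp(\sigma)$ with a margin of $k$ on each side. The difficulty is that $y$ acts as a translation by $1$ only on the bounded interval $I$ and as translation by $k$ outside it, so conjugating a window across the boundary of $I$ introduces gaps that must be filled before the clean bootstrap can start, and the supports of repeated conjugates and commutators of $x$ by powers of $y$ spread out (since $\sigma$ is not $k$-periodic) and need to be tracked carefully. Everything else — the existence of the required $\sigma$, the $3$-cycle computations, and the facts about generating alternating groups — is routine.
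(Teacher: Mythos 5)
There is a fatal gap here, and the ``base case'' you flag as the main obstacle is genuinely impossible rather than merely delicate: the pair you propose can never generate $G_k$ for $k\ge 3$. Your second generator $y=\sigma t^k$ lies in $\Alt(\Z)t^k$ and hence has exactly $k\ge 3$ infinite orbits on $\Z$ (Lemma \ref{orbits}); in your construction $y$ acts as $i\mapsto i+1$ on $I\supseteq\{1,2,3\}$, so $1,2,3$ all lie on a single $y$-orbit $\mathcal{O}\subsetneq\Z$. Now every element of $\langle x,y\rangle\cap\Alt(\Z)$ is a product of conjugates $y^{-j}x^{\pm1}y^{j}$, and each such conjugate is the $3$-cycle $((1)y^{j}\;(2)y^{j}\;(3)y^{j})$, supported inside $\mathcal{O}$. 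Hence every finitary element of $\langle x,y\rangle$ fixes $\Z\setminus\mathcal{O}$ pointwise, $\langle x,y\rangle$ does not act transitively on $\Z$ (whereas $G_k\ge\Alt(\Z)$ does), and in particular $\langle x,y\rangle$ never contains $\Alt(W)$ for any $W$ meeting $\Z\setminus\mathcal{O}$ --- such as any $W\supseteq\supp(\sigma)$, since $\supp(\sigma)$ necessarily contains the $k-1$ points immediately to the left of $I$, which lie on the other orbits of $y$. This is exactly the phenomenon behind part (iii) of Theorem \ref{question1}: for $k\ge 3$ the element $(1\;2\;3)$ is not part of \emph{any} generating pair of $G_k$, so no choice of $y$ can rescue $x=(1\;2\;3)$.

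The reduction in your first paragraph is fine, and the conjugate-and-amalgamate machinery would work given a correct base case; what must change is the first generator. Any $x$ that works must have support meeting every infinite orbit of $y$ (and finite orbits must be handled too), which already forces $|\supp(x)|\ge k$. The paper takes $y=t^k$ itself and builds $x=\alpha$ as a product $\prod_i t^{-2ik}\sigma_i t^{2ik}$ of permutations of the pairwise disjoint blocks $\Omega_{2k}t^{2ik}$, each of which meets all $k$ residue classes mod $k$; it then recovers $(1\;2\;3)$ as a commutator of $\alpha$ with one of its $t^{k}$-conjugates $\beta_m=t^{2mk}\alpha t^{-2mk}$, and conjugates it by the other $\beta_m$ to a full set of $3$-cycles generating $\Alt(\Omega_{2k})$, which suffices by the preceding lemma. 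If you want to salvage the translation-window idea you would need to replace $(1\;2\;3)$ by an even permutation whose support hits all $k$ orbits of $y$ and then still extract $3$-cycles from commutators, at which point you are essentially reconstructing the paper's argument.
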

\begin{proof}
Our aim is to reduce to the previous lemma. From the assumption that $k\ge 3$, all $3$-cycles in $\Alt(\Omega_{2k})$ are conjugate. Our generating set will consist of $t^k$ and an element $\alpha\in \Alt(\Z)$.

Let $n=\binom{2k}{3}$ and let $\omega_1, \ldots, \omega_n$ be a choice of distinct $3$-cycles in $\Alt(\Omega_{2k})$ with $\omega_i\ne\omega_j^{-1}$ for every $i,j \in \{1, \ldots, n\}$. Thus $\langle \omega_1, \ldots, \omega_n\rangle=\Alt(\Omega_{2k})$. Set $\sigma_0=(1\;3)$ and $\sigma_{n+1}=(2\;3)$ and observe that $(\sigma_{n+1}^{-1}\sigma_0^{-1}\sigma_{n+1})\sigma_0=(1\;2\;3)$. Now choose $\sigma_1, \ldots, \sigma_{n}\in \Alt(\Omega_{2k})$ such that for each $m\in\{1,\ldots, n\}$  we have $\sigma_m^{-1}(1\;2\;3)\sigma_m=\omega_m$.

Let $\alpha:=\prod_{i=0}^{n+1}t^{-2ik}\sigma_it^{2ik}$ and, for each $m\in\{1,\ldots, n+1\}$, let $\beta_m:=t^{2mk}\alpha t^{-2mk}$. Then $\beta_{n+1}^{-1}\alpha^{-1}\beta_{n+1}\alpha=\sigma_{n+1}^{-1}\sigma_1^{-1}\sigma_{n+1}\sigma_1=(1\;2\;3)$ and, for $m\in\{1, \ldots, n\}$, we have that $\beta_m^{-1}(1\;2\;3)\beta_m=\omega_m$. Hence $\langle \alpha, t^k\rangle=G_k$.
\end{proof}
\begin{rem} By including an odd element in the choice of the $\sigma_1, \ldots, \sigma_n$, we note that there exists $\alpha_k\in\FSym(\Z)$ such that $\langle \FSym(\Z), t^k\rangle=\langle \alpha_k, t^k\rangle$ for every $k \in \{3, 4, \ldots\}$. Then $\langle \FSym(\Z), t^2\rangle=\langle \alpha_4, t^2\rangle$ since $\FSym(\Z)\le \langle \alpha_4, t^4\rangle\le \langle \alpha_4, t^2\rangle$.
\end{rem}

\section{On the spread of the groups $G_k$, where $k \in \N$}
Now that we have defined our 2-generated groups, we can investigate their spread. Our first aim is to prove the following. Part (i) is Lemma \ref{2gen} and part (ii) is Lemma \ref{cyclicquotients}.
\begin{thm} \label{question1} The groups $G_k:=\langle \Alt(\Z), t^k\rangle$, indexed over the naturals, satisfy:
\begin{enumerate}[i)]
\item for every $k \in \N$, the group $G_k$ is 2-generated;
\item for every $k \in \N$, every proper quotient of $G_k$ is cyclic; but
\item for every $k \in \{3, 4, \ldots\}$, the group $G_k$ is not $\frac32$-generated i.e.\ $s(G_k)=0$.  
\end{enumerate}
\end{thm}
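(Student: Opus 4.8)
The plan is to prove the three parts by citing the work already done and then handling the only genuinely new assertion, part (iii). Parts (i) and (ii) are immediate: part (i) is exactly Lemma~\ref{2gen}, which established that $G_k$ is $2$-generated for all $k\ge 3$, while the cases $k=1,2$ follow from the remark that $G_1=\langle(1\;2\;3),t\rangle$ and $G_2=\langle(1\;2\;3),t^2\rangle$; part (ii) is precisely Lemma~\ref{cyclicquotients}. So the substance of the proof is to show that $s(G_k)=0$ for every $k\ge 3$.

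For part (iii), the strategy is to exhibit a single nontrivial element $g\in G_k$ such that $\langle g,h\rangle\ne G_k$ for all $h\in G_k$; by the Lemma$^{*}$ in the introduction (or just directly) this forces $s(G_k)=0$. The natural candidate, flagged in the discussion after the theorem statement, is $g=(1\;2\;3)$, which indeed lies in $G_k=\langle\Alt(\Z),t^k\rangle$ since $\Alt(\Z)\le G_k$. The key structural fact to use is that $G_k/\Alt(\Z)\cong\langle t^k\rangle\cong\Z$, so that the abelianisation-type quotient detects the ``$t$-exponent'' of any element. First I would observe that for any $h\in G_k$, writing $\bar h$ for its image in $G_k/\Alt(\Z)\cong\Z$, the subgroup $\langle g,h\rangle$ maps onto $\langle\bar g,\bar h\rangle=\langle 0,\bar h\rangle=\langle\bar h\rangle\le\Z$, since $g=(1\;2\;3)\in\Alt(\Z)$ has $\bar g=0$. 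For $\langle g,h\rangle$ to equal $G_k$ we would need this image to be all of $\Z$, hence $\bar h$ must generate $\Z$, i.e.\ $h\in\Alt(\Z)t^k$ or $h\in\Alt(\Z)t^{-k}$.

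So the remaining task is to rule out $h\in\Alt(\Z)t^{\pm k}$. Here I would use a counting/support argument at the ``small end'' of $\Z$: by Remark~\ref{orbitobs}, $h$ acts as the shift $x\mapsto x+k$ (or $x\mapsto x-k$) outside a finite window $\{-z,\dots,z\}$, and the generator $g=(1\;2\;3)$ has support $\{1,2,3\}$. The point is that $\langle g,h\rangle$ can only ever move points in a bounded region: conjugates $h^{-m}gh^{m}$ have support $\{1,2,3\}$ translated by $\pm mk$, but since $3<2k$ when $k\ge 2$, the three points $1,2,3$ stay within one ``block'' and the conjugates $(1{+}mk\;,2{+}mk\;,3{+}mk)$ generate only the alternating group on $\bigcup_m\{1{+}mk,2{+}mk,3{+}mk\}$, a proper subset of $\Z$ missing, e.g., the residue class $0\bmod k$ entirely (using $k\ge 3$ so that residues $1,2,3$ are distinct and do not exhaust $\mathbb Z/k\mathbb Z$). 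More carefully, one shows by induction on word length that every element of $\langle g,h\rangle$ agrees with some power of $h$ (hence with a shift) outside the set $S:=\{mk+1,mk+2,mk+3:m\in\Z\}$, because both generators have this property and the property is closed under products and inverses; since $S\ne\Z$ for $k\ge 3$, no element of $\langle g,h\rangle$ can move a point in $\Z\setminus S$, whereas $G_k\supseteq\Alt(\Z)$ certainly contains such elements. The main obstacle is making this ``bounded support modulo shifts'' invariant precise and checking it is genuinely preserved under multiplication --- one must track how the finite window and the set $S$ interact when composing two group elements --- but this is a routine induction once the invariant is correctly formulated, and the case analysis only needs $k\ge 3$ to guarantee $S\subsetneq\Z$. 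I would then conclude that $(1\;2\;3)$ is not part of any generating pair, so $G_k$ is not $\tfrac32$-generated and $s(G_k)=0$.
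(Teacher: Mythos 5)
Parts (i) and (ii) are handled exactly as in the paper (Lemmas \ref{2gen} and \ref{cyclicquotients}, plus the computation for $k=1,2$), and your reduction of (iii) to the cosets $\Alt(\Z)t^{\pm k}$ via the quotient $G_k/\Alt(\Z)\cong\Z$ is fine. For $k\ge 4$ your orbit idea is essentially the paper's: the $\langle g,h\rangle$-orbit of $\{1,2,3\}$ is contained in the union of the $h$-orbits of $1,2,3$, and since $h=\omega t^{\pm k}$ has $k$ infinite orbits, at least one infinite orbit misses $\{1,2,3\}$ and witnesses non-transitivity. (Two small corrections there: because of the finitary part $\omega$, the conjugates $h^{-m}gh^m$ do \emph{not} have support $\{1+mk,2+mk,3+mk\}$ and your set $S$ is not literally the union of three residue classes; the robust invariant is the union of the three $h$-orbits of $1,2,3$, which is $\langle g,h\rangle$-invariant, not the residue-class set $S$.)

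The genuine gap is the case $k=3$. Your parenthetical claim that for $k\ge 3$ the residues $1,2,3$ ``do not exhaust $\Z/k\Z$'' is false when $k=3$: they are $1,2,0 \pmod 3$, so $S=\Z$ and your non-transitivity argument collapses. Indeed, for $h=\omega t^{\pm 3}$ whose three infinite orbits each contain one of $1,2,3$ and together cover $\Z$, the group $\langle(1\;2\;3),h\rangle$ \emph{is} transitive on $\Z$, so no orbit obstruction exists. The paper handles this with a separate argument: after relabelling so that the three orbits are the residue classes mod $3$, the conjugates $h^{-m}(1\;2\;3)h^m$ for distinct $m$ have pairwise disjoint supports, so $\langle(1\;2\;3),h\rangle\cong C_3\wr\Z$, which acts imprimitively on $\Z$ (preserving the block system given by the three orbits) and therefore cannot contain $\Alt(\Z)$. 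Without some such imprimitivity (or wreath-product) argument, your proof does not establish $s(G_3)=0$.
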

We show (iii) by dealing separately with the cases $k\ge4$ and $k=3$.
\begin{lem}  Let $k\in \{4, 5, \ldots\}$. Then $G_k$ has spread zero.
\end{lem}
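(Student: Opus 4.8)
The plan is to establish the remark preceding Theorem~\ref{question1}: that the $3$-cycle $(1\;2\;3)$, which certainly lies in $G_k$, is not part of a generating pair, which is exactly the assertion $s(G_k)=0$. So let $h\in G_k$ be arbitrary; I must show $\langle (1\;2\;3),h\rangle\ne G_k$. If $h\in\Alt(\Z)$ then $\langle (1\;2\;3),h\rangle\le\Alt(\Z)\subsetneq G_k$ and there is nothing to prove, so assume $h\notin\Alt(\Z)$. Since $G_k=\Alt(\Z)\rtimes\langle t^k\rangle$, we then have $h=\sigma t^{km}$ for some $\sigma\in\Alt(\Z)$ and some integer $m\ne 0$.

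The crux is a count of the infinite orbits of such an $h$ on $\Z$. Put $c:=km$, so that $|c|\ge k\ge 4$. By Remark~\ref{orbitobs} there is a finite window outside of which $h$ acts as the translation $x\mapsto x+c$. Using this I would argue that each infinite $h$-orbit, traversed in the direction of increasing iteration, is eventually an arithmetic progression of common difference $c$ and so determines a residue modulo $|c|$ (its ``tail at infinity''); that distinct infinite orbits determine distinct residues, their tails being disjoint; and that every residue class modulo $|c|$ occurs. Hence $h$ has exactly $|c|\ge 4$ infinite orbits. Verifying that the finitary factor $\sigma$ cannot collapse these into fewer orbits is the step that genuinely uses $k\ge 4$ and is, I expect, the main point to set down carefully; for $k=3$ the points $1,2,3$ can lie in three distinct infinite orbits of a candidate mate and thereby exhaust them all, which is precisely why the case $k=3$ is handled separately.

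Given this, since $\{1,2,3\}$ meets at most three $h$-orbits while $h$ has at least four infinite orbits, I may fix an infinite $h$-orbit $O$ with $O\cap\{1,2,3\}=\varnothing$; moreover $\Z\setminus O$ is infinite, since it contains the remaining (at least three) infinite orbits. Let $\chi\colon\Z\to\{0,1\}$ be the indicator function of $O$. Both generators respect $\chi$: the element $h$ maps $O$ onto $O$ and its complement onto its complement, while $(1\;2\;3)$ fixes everything outside $\{1,2,3\}\subseteq\Z\setminus O$ and merely permutes three points, all of colour $0$. Hence $\langle (1\;2\;3),h\rangle$ is contained in the subgroup $G_k^{\chi}:=\{g\in G_k:\chi\circ g=\chi\}$ of $G_k$. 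But $G_k^{\chi}$ is proper: choosing $a\in O$ and distinct $b,b'\in\Z\setminus O$, the $3$-cycle $(a\;b\;b')$ lies in $\Alt(\Z)\le G_k$ yet carries $a$ (colour $1$) to $b$ (colour $0$), so it is not in $G_k^{\chi}$. Therefore $\langle (1\;2\;3),h\rangle\subsetneq G_k$. As $h$ was arbitrary, $(1\;2\;3)$ has no mate, so $G_k$ is not $\frac32$-generated, i.e.\ $s(G_k)=0$.
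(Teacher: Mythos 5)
Your argument is correct and is essentially the paper's own proof: count the infinite orbits of $h=\sigma t^{km}$ (the paper notes there are at least $k\ge 4$ of them, via the same ``eventually a translation'' description of orbits), find one disjoint from $\{1,2,3\}$, and conclude that $\langle (1\;2\;3),h\rangle$ preserves it and so cannot contain the transitive group $\Alt(\Z)$. The only quibble is that $k\ge 4$ is used purely in the count $|km|\ge k>3$, not in showing $\sigma$ cannot merge tails (which holds for every $k$); otherwise the two proofs coincide.
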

\begin{proof} We will show, given any $h \in G_k$, that $\langle (1\;2\;3), h\rangle$ does not contain $\Alt(\Z)$ and so cannot equal $G_k$.

If $h\in \Alt(\Z)$, then $\langle (1\;2\;3), h\rangle$ is finite. Thus, without loss of generality, $h=\omega t^n$ where $\omega \in \Alt(\Z)$ and $n \in k\N$. From our description of the orbits of $H_2$, we know that $h$ has at least $k$ infinite orbits. Thus $h$ has an infinite orbit $\mathcal{O}_4$ which does not intersect $\{1, 2, 3\}$. Let $x\in \mathcal{O}_4$. Then the orbit of $x$ under $\langle (1\;2\;3), h\rangle$ is $\mathcal{O}_4$.
\end{proof}

\begin{lem} The group $G_3$ has spread zero.
\end{lem}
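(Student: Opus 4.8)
The plan is to show that the $3$-cycle $(1\;2\;3)\in G_3$ lies in no generating pair, which by definition gives $s(G_3)=0$. Fix $h\in G_3$ and write $K:=\langle (1\;2\;3),h\rangle$. We will analyse the orbits of $K$ on $\Z$ and see that $K$ always acts either \emph{intransitively} or \emph{imprimitively} on $\Z$, whereas $\Alt(\Z)\le G_3$ acts transitively and primitively. Since this forces $\Alt(\Z)\not\le K$, we get $K\ne G_3$.

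If $h\in\Alt(\Z)$, then $K$ fixes $\Z\setminus\bigl(\supp(1\;2\;3)\cup\supp(h)\bigr)$ pointwise, hence is finite, so $K\ne G_3$. Otherwise $h=\omega t^{3n}$ with $\omega\in\Alt(\Z)$ and $n\ne 0$, and since $\langle (1\;2\;3),h\rangle=\langle (1\;2\;3),h^{-1}\rangle$ we may assume $n\ge 1$. The key point is that $(1\;2\;3)$ fixes every point outside $\{1,2,3\}$, so any $h$-orbit disjoint from $\{1,2,3\}$ is fixed pointwise by $(1\;2\;3)$, hence is $K$-invariant. By Lemma~\ref{orbits} and Remark~\ref{orbitobs}, $h$ has at least $3n\ge 3$ infinite orbits, together with finitely many finite orbits. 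If some $h$-orbit misses $\{1,2,3\}$, then that orbit is a proper non-empty $K$-invariant subset of $\Z$; since $\Alt(\Z)$ does not preserve it (take $x$ in the orbit, $y$ outside it, and a third point $z$: the $3$-cycle $(x\;y\;z)\in\Alt(\Z)$ sends $x$ outside the orbit), we conclude $\Alt(\Z)\not\le K$, so $K\ne G_3$.

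It remains to treat the case where every $h$-orbit meets $\{1,2,3\}$. Then $h$ has at most $3$ orbits on $\Z$, so the bound of ``at least $3n$ infinite orbits'' forces $n=1$ and $h$ to have exactly three orbits, all infinite, each containing exactly one of $1,2,3$; write $i\in\mathcal{O}_i$, so that $\Z=\mathcal{O}_1\sqcup\mathcal{O}_2\sqcup\mathcal{O}_3$. For $k\in\Z$ put $B_k:=h^k\{1,2,3\}=\{(1)h^k,(2)h^k,(3)h^k\}$. Because $1,2,3$ lie in distinct infinite $h$-orbits, the sets $B_k$ are pairwise disjoint: if $(a)h^k=(b)h^{k'}$ with $a,b\in\{1,2,3\}$ then $(a)h^{k-k'}=b$, so $a$ and $b$ share an $h$-orbit, forcing $a=b$ and then $k=k'$. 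Moreover $\bigcup_k B_k=\mathcal{O}_1\cup\mathcal{O}_2\cup\mathcal{O}_3=\Z$. Hence $\mathcal{B}:=\{B_k:k\in\Z\}$ is a partition of $\Z$ into blocks of size $3$, and it is preserved by $h$ (which sends $B_k$ to $B_{k+1}$) and by $(1\;2\;3)$ (which permutes $B_0=\{1,2,3\}$ within itself and fixes every other $B_k$ pointwise, these being disjoint from $\{1,2,3\}$). So $K$ preserves the non-trivial partition $\mathcal{B}$, i.e.\ $K$ is imprimitive. But $\Alt(\Z)$ is primitive on $\Z$ (indeed highly transitive): concretely, $(1\;2\;4)\in\Alt(\Z)$ maps $B_0$ to $\{2,3,4\}$, which is not a block of $\mathcal{B}$ since the unique block of $\mathcal{B}$ containing $2$ is $B_0\ne\{2,3,4\}$. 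Thus $(1\;2\;4)\notin K$, so again $K\ne G_3$. Combining the cases, $(1\;2\;3)$ lies in no generating pair, so $s(G_3)=0$.

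The main obstacle is the final case: once $h$ is ``essentially'' $t^{\pm3}$, the orbit-counting argument used for $k\ge 4$ breaks down (three infinite orbits can absorb $\{1,2,3\}$ and leave nothing behind), and one must instead realise that the pair $\bigl((1\;2\;3),h\bigr)$ is trapped inside the block system obtained by translating $\{1,2,3\}$ along $\langle h\rangle$. The only slightly delicate points there are verifying that the $B_k$ genuinely partition $\Z$ (disjointness, which is exactly where the hypothesis ``$1,2,3$ lie in distinct infinite orbits'' is used) and that $(1\;2\;3)$ respects the partition (because it fixes every block other than $B_0$ pointwise).
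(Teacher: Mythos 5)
Your proof is correct and follows essentially the same route as the paper: reduce to $h=\omega t^{\pm 3}$, observe that any $h$-orbit missing $\{1,2,3\}$ gives an intransitive action, and in the remaining case exhibit the imprimitive block system formed by the $\langle h\rangle$-translates of $\{1,2,3\}$ (the paper phrases this as $\langle (a_1\;a_2\;a_3),h\rangle\cong C_3\wr\Z$, but it is the same block structure). Your explicit verification that the $B_k$ partition $\Z$ and that $(1\;2\;4)$ breaks the block system is a slightly more detailed write-up of the same argument.
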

\begin{proof} With a similar approach to the previous lemma, we will show, given any $h \in G_3$, that $\langle (1\;2\;3), h\rangle\ne G_3$.

If $h\in \Alt(\Z)$, then $\langle (1\;2\;3), h\rangle$ is finite. Similarly, if $h=\omega t^n$ where $\omega\in \Alt(\Z)$ and $n\not\in\{3, -3\}$, then $|n|>3$ and the argument in the previous lemma proves the result. Thus, without loss of generality, $h=\omega t^3$ where $\omega \in \Alt(\Z)$.

Let $\mathcal{O}_1, \mathcal{O}_2$, and $\mathcal{O}_3$ denote the three infinite orbits of $h$. After relabelling, we can assume that $i \in \mathcal{O}_i$ for $i=1, 2, 3$ (for if, say, $\mathcal{O}_3$ did not contain a point from $\{1, 2, 3\}$, then we could run the argument from the previous lemma). Moreover, if $\Z\setminus(\mathcal{O}_1\cup \mathcal{O}_2\cup \mathcal{O}_3)=Y\ne \emptyset$, then the orbit of points in $Y$ under $\langle (1\;2\;3), h\rangle$ is the same as the orbit of points in $Y$ under $\langle h\rangle$. By construction this orbit cannot be $\Z$, which contradicts that $\Alt(\Z)\le G_3$ acts transitively on $\Z$. Then $\mathcal{O}_1\cup \mathcal{O}_2\cup \mathcal{O}_3=\Z$, and by relabelling $\Z$ we can assume that $$\mathcal{O}_i=\{n \in \Z\;:\;n\equiv i\mod3\}\text{ for }i=1, 2, 3$$
and $(1\;2\;3)$ has been relabelled as $\sigma=(a_1\;a_2\;a_3)$ where $a_i \in \mathcal{O}_i$ for $i=1, 2, 3$. But then $\langle (a_1\;a_2\;a_3), h\rangle\cong C_3\wr \Z$, since for every $m\in\Z\setminus\{0\}$ we have that $h^{-m}(a_1\;a_2\;a_3)h^m$ and $(a_1\;a_2\;a_3)$ are disjoint. Therefore $\langle (a_1\;a_2\;a_3), h\rangle$ has an imprimitive action on $\Z$, and so cannot contain $\Alt(\Z)$.
\end{proof}

\begin{not*} For any $\omega \in \FSym(\Z)$, let $s_\omega:=\omega^{-1}t\omega$.
\end{not*}
The following intriguing observation tells us that the groups $G_k$ are, in some sense, not far from having infinite spread.
\begin{thm}\label{almostinfinitespread} Let $k, n \in \N$ and $g_1, \ldots, g_n \in G_k\setminus \Alt(\Z)$. Then there exists an $h\in [t^k]\subset G_k$ such that $\langle g_1, h\rangle=\ldots=\langle g_n, h\rangle=G_k$.
\end{thm}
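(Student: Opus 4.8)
The plan is to take $h$ to be a suitably chosen conjugate of $t^k$ and to prove that $\Alt(\Z)\le\langle g_i,h\rangle$ for every $i$; this already suffices, because every element of $[t^k]$ has the same image as $t^k$ in the abelian quotient $G_k/\Alt(\Z)\cong\langle t^k\rangle$, so if $\langle g_i,h\rangle$ contains $\Alt(\Z)$ then it also surjects onto $G_k/\Alt(\Z)$ and therefore equals $G_k$. Thus the whole problem is to force $\Alt(\Z)$ into each $\langle g_i,h\rangle$.

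To do this I would use a classical theorem of Jordan, in the form: a primitive subgroup of $\Sym(\Z)$ containing a nontrivial finitary permutation contains the finitary alternating group $\FAlt(\Z)=\Alt(\Z)$. So it is enough to exhibit a single $h\in[t^k]$ such that, for every $i$, the group $\langle g_i,h\rangle$ is primitive on $\Z$ and contains a nontrivial element of $\FSym(\Z)$. Write $g_i=\omega_it^{km_i}$ with $\omega_i\in\Alt(\Z)$ and $m_i\ne 0$ (this is exactly what $g_i\notin\Alt(\Z)$ means) and $h=\alpha^{-1}t^k\alpha$ with $\alpha\in\Alt(\Z)$ (every element of $[t^k]$ has this form). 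Then the finitary condition is almost free: $g_ih^{-m_i}=\omega_i\cdot(t^{km_i}\alpha^{-1}t^{-km_i})\cdot\alpha$ is a product of three finitary permutations, hence lies in $\FSym(\Z)$, and one only has to avoid the small set of $\alpha$ for which some $g_i$ becomes an exact power of $h$. The real content is primitivity.

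Here the relevant structural fact is that, by Remark \ref{orbitobs}, both $g_i$ and $h$ are eventually translations by a multiple of $k$ — outside a finite set $g_i$ sends $x$ to $x+km_i$ and $h$ sends $x$ to $x+k$ — so any block system of $\langle g_i,h\rangle$ is forced to be ``eventually periodic'': far out, its blocks are finite patterns repeating along $\Z$ with a bounded period, or unions of residue classes modulo $k$. Only finitely many such systems are relevant once a window is fixed, and each of them is destroyed provided the defect of $h$ interacts with $g_i$ in a way incompatible with that periodicity. Passing to the conjugated picture $\langle g_i,h\rangle=\alpha^{-1}\langle\alpha g_i\alpha^{-1},t^k\rangle\alpha$, the task is therefore to choose one $\alpha\in\Alt(\Z)$ so that, for every $i$, the finitary defect of $\alpha g_i\alpha^{-1}$ from $x\mapsto x+km_i$ connects all $k$ residue classes that $t^k$ separates — giving transitivity — and does so with enough asymmetry to kill every eventually-periodic block system. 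For a single $g_i$ this is straightforward: $g_i=\omega_it^{km_i}$ with $m_i\ne 0$ has plenty of infinite orbits (in fact exactly $k\lvert m_i\rvert$ of them), and taking $\alpha$ with a large, explicitly designed support lets one steer those orbits and the defect across the residue classes as one pleases.

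The step I expect to be the main obstacle is carrying this out for $g_1,\dots,g_n$ simultaneously with a single conjugating element $\alpha$: the translation lengths $km_i$ differ, the element $\alpha$ is shared, and the rearrangement that makes $\langle g_i,h\rangle$ primitive might disturb the corresponding requirement for some $g_j$. I expect the resolution to be that each of the finitely many requirements is finitary in nature — it constrains $\alpha$ only on a bounded region and asks only for boundedly many connecting moves — so that, after fixing one large window containing the defects of all the $g_i$ together with the finitely many relevant translates, a single $\alpha$ can be built on that window to meet every requirement at once. Producing and verifying one explicit such $\alpha$ is the technical heart of the proof; the rest (the reduction to a permutation-group statement, the restriction on possible block systems, and the appeal to Jordan's theorem) is comparatively soft.
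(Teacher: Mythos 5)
Your reduction to showing $\Alt(\Z)\le\langle g_i,h\rangle$ is exactly right, and your observation that $g_ih^{-m_i}$ is automatically a finitary permutation mirrors the paper's key computation (there the element $(\omega_it^{-c})^2s_\sigma^{2c}=\omega_i^*(t^{-2c}\sigma^{-1}t^{2c})\sigma$ plays this role). From that point on, however, you take a genuinely different route: the paper never invokes primitivity or the Jordan--Wielandt theorem. It first replaces each $g_i$ by a power so that all of them become $\omega_it^{-c}$ for one common $c\in k\N$ (legitimate, since $\langle g_i^j,h\rangle\le\langle g_i,h\rangle$), and then re-runs the explicit $2$-generation argument of Lemma \ref{2gen} with $\omega_it^{-c}$ in the role of $t^k$: the support of the conjugator is spread over translates of $\Omega_{2c}$ so that commutators of $g_i$-conjugates of the finitary element produce a single $3$-cycle, whose further $g_i$-conjugates generate $\Alt$ on a half-line and hence all of $\Alt(\Z)$. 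That constructive path sidesteps primitivity entirely, and the simultaneity over $i$ comes for free because the common $c$ lets one window (to the right of all the defects, via Remark \ref{orbitobs}) serve every $i$ at once.

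The gap in your version is the primitivity step. You assert that every block system of $\langle g_i,h\rangle$ is ``eventually periodic'' and that ``only finitely many such systems are relevant once a window is fixed'', and you then defer the construction of $\alpha$ to ``the technical heart''. Neither claim is established, and neither is routine: a transitive subgroup of $\Sym(\Z)$ whose generators are eventually translations can a priori admit block systems with infinite blocks (unions of residue classes, exactly the obstruction in the paper's proof that $G_3$ has spread zero), finite blocks of arbitrary size, or blocks pairing far-left with far-right points; classifying these and showing that finitely many finitary constraints on $\alpha$ destroy all of them simultaneously for every $i$ is precisely the content you would need to supply, and it is not visibly easier than the paper's direct construction. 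Until that is done the argument is a plausible programme rather than a proof. Two smaller remarks: the Jordan--Wielandt statement you quote is correct, so the strategy is sound in principle; and the ``different translation lengths'' difficulty you worry about dissolves at the outset by passing to powers of the $g_i$, as the paper does.
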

\begin{proof} Fix a $k \in \N$. We start by taking powers of the $g_1, \ldots, g_n$ so to obtain
$$\omega_1t^{-c}, \ldots, \omega_nt^{-c}$$
for some $c\in k\N$ and $\omega_1, \ldots, \omega_n \in \Alt(\Z)$. Our aim is to show that there exists a $\sigma \in \Alt(\Z)$ so that $s_{\sigma}^k$ is a suitable choice for our element $h\in G_k$.

From Remark \ref{orbitobs}, there exists an $m\in \N$ such that for every $z\in \Z\setminus\{-m, \ldots, m\}$ and every $i\in \{1, \ldots, n\}$ we have $\omega_it^{-c}(z)=z-c$. Fix an $i\in \{1, \ldots, n\}$. Let $\omega_i^*:=(\omega_it^{-c})^2t^{2c}$ so that $(\omega_it^{-c})^2=\omega_i^*t^{-2c}$. Then, for any $\sigma \in \Alt(\Z)$,
$$w_i^*t^{-2c}s_\sigma^{2c}=\omega_i^*(t^{-2c}\sigma^{-1}t^{2c})\sigma$$
and we can choose $\sigma$ so that $\supp(\sigma)\cap \supp(t^{-2c}\sigma^{-1}t^{2c})=\emptyset$. Recall that $\Omega_{2c}=\{1, \ldots, 2c\}$. We will choose $\sigma$ so that $\supp(\sigma)\cap \{-m, \ldots, m\}=\emptyset$ and
$$\supp(\sigma)\subset \bigcup_{j\in \Z}\Omega_{2c}t^{4jc}.$$

We now mimick the proof of Lemma \ref{2gen} (with $\omega_it^{-c}$ taking the role of $t^k$) and show that there exists $\sigma \in \FSym(\Z)$, with the conditions above, such that $\langle \omega_i^*(t^{-2c}\sigma^{-1}t^{2c})\sigma, \omega_it^{-c}\rangle$ contains $\Lambda:=\{\alpha \in \Alt(\Z)\;:\;\min(\supp(\alpha))\ge m\}$. Let $n=\binom{2c}{3}$, pick a set of distinct $3$-cycles $\tau_1, \ldots, \tau_n$ with support in $\Omega_{2c}$ such that $\tau_i\ne\tau_j^{-1}$ for any $i, j\in\{1, \ldots, n\}$, and choose $\sigma_1, \ldots, \sigma_n$ with support in $\Omega_{2c}$ such that $\sigma_i^{-1}(1\;2\;3)\sigma_i=\tau_i$ for $i=1, \ldots, n$. Choose $\sigma$ so that $t^{4c(m+i)}\sigma t^{-4c(m+i)}$ acts on $\Omega_{2c}$ in the same way as $\sigma_i$ for $i=1, \ldots, n$, and moreover make $\sigma$ act on $\Omega_{2c}t^{-4cm}$ and $\Omega_{2c}t^{4c(m+n+1)}$ in such a way that there exists $d\in\N$ so that the commutator of $(\omega_it^{-c})^d(\omega_i^*(t^{-2c}\sigma^{-1}t^{2c})\sigma)(\omega_it^{-c})^{-d}$ and $\omega_i^*(t^{-2c}\sigma^{-1}t^{2c})\sigma$ produces $t^{-4c(m+n+1)}(1\;2\;3)t^{4c(m+n+1)}$. We can assume that $\sigma\in \Alt(\Z)$ by potentially composing it with $(x\;x+1)$ for some suitably large $x\in\N$. Then, for every $i\in \{1, \ldots, n\}$, we have that $\langle s_{\sigma}^k, g_i\rangle\ge \langle s_{\sigma}^k, \Lambda\rangle\ge \langle s_\sigma^k, \Alt(\Z)\rangle=G_k$.
\end{proof}
\begin{rem} \label{avoiding} There is great flexibility for the element $\sigma$ in the above proof. First, given any $m'\in \N$, we can insist that $\supp(\sigma)\cap\{-m', \ldots, m'\}=\emptyset$. Secondly, given any $\tau\in\Alt(\Z)$, we can choose $m'$ large enough so that $\supp(\sigma)\cap\supp(\tau)=\emptyset$ and that $s_{\sigma\tau}^k$ and $s_{\tau\sigma}^k$ both generate $G_k$ with each of $g_1, \ldots, g_n\in G_k\setminus\Alt(\Z)$.
\end{rem}
The previous theorem reveals curious properties for the generating graphs of each group $G_k$ (which is a graph with vertex set $G$ and an edge between vertices which generate the group). If we consider the elements of $G_k$ as obtained through the union of balls $B_n(G_k, X)$ for some finite generating set $X$ of $G_k$, then, by the previous lemma, for each $m\in\N$ we can construct an element $\gamma_m\in\Alt(\Z)$ such that $\gamma_m^{-1}t^k\gamma_m$ generates $G_k$ with every element in $B_m(G_k, S)\setminus \Alt(\Z)$. Since $B_n(G_k, X)$ grows exponentially with $n$, the set $\{\gamma_m^{-1}t^k\gamma_m\;:\;m\in\N\}$ can be considered as `small' in $G_k$. Note also the similarity to what is known, from \cite{H}, for finite simple groups. In that world, there is a `small' set of elements $S$ - in certain cases of size 2 - such that for every $g \in G\setminus\{1\}$ there exists an $s \in S$ such that $\langle g, s\rangle=G$.
\section{Conditions for the generation of $G_1$ and $G_2$}
We start with a criteria for when a $3$-cycle generates $G_1$ with the element $t$.
\begin{lem} \label{3cyclecondition} Let $\omega\in \Alt(\Z)$, $s:=s_\omega$, and $a, b\in \Z$ with $a<0<b$. Define $d_a, d_b\in \N$ such that $(a)s^{d_a}=0$ and $(b)s^{-d_b}=0$. Then $\langle (a\;0\;b), s\rangle=G_1$ if and only if $\gcd(d_a, d_b)=1$.
\end{lem}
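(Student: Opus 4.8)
The plan is to analyse the group $\langle (a\;0\;b), s\rangle$ via its action on $\Z$, using that $s=s_\omega=\omega^{-1}t\omega$ is conjugate to $t$ and hence has a single infinite orbit, namely all of $\Z$. Write $c:=(a\;0\;b)$. First I would set up the dynamics: since $(a)s^{d_a}=0$ and $(b)s^{-d_b}=0$, conjugating $c$ by powers of $s$ slides the $3$-cycle along the $s$-orbit, and in particular $s^{d_a}cs^{-d_a}$, $c$, and $s^{-d_b}cs^{d_b}$ are three $3$-cycles all involving the point $0$ (with the others at controlled $s$-distances). The key structural observation is that $\langle (a\;0\;b), s\rangle\ge \Alt(\Z)$ if and only if the subgroup generated by these translates acts primitively (equivalently, transitively on a suitable overlapping structure) — the standard Jordan-type criterion that a primitive permutation group containing a $3$-cycle contains the full alternating group. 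So the crux is: for which $d_a,d_b$ does $\langle c, s\rangle$ (or rather its intersection with $\FSym(\Z)$) act primitively on $\Z$?

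For the \emph{only if} direction, suppose $g:=\gcd(d_a,d_b)=e>1$. I would exhibit an invariant partition. The idea: relabel the $s$-orbit as $\Z$ via $i\mapsto (0)s^i$, so that $s$ becomes translation by $1$ and $c$ becomes a $3$-cycle on the three points $\{-d_a,\,0,\,d_b\}$ (after relabelling $a\mapsto -d_a$, $b\mapsto d_b$). Both $-d_a$ and $d_b$ are divisible by $e$, and so are $0$; hence $c$ permutes only points in the congruence class $0 \bmod e$. Conjugating $c$ by $s^j$ moves this $3$-cycle to points all lying in the class $j \bmod e$. Therefore the group $\langle c, s\rangle$ preserves the partition of $\Z$ into residue classes mod $e$: $s$ permutes the classes cyclically and each conjugate $s^{-j}cs^j$ stabilises every class setwise. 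So the action is imprimitive, hence $\langle c,s\rangle \not\ge \Alt(\Z)$, and since $G_1/\Alt(\Z)\cong \Z$ is abelian while the only normal subgroups available are inside $\Alt(\Z)$ by the uniqueness-of-minimal-normal-subgroup lemma, $\langle c, s\rangle$ is a proper subgroup: $\langle (a\;0\;b), s\rangle \ne G_1$.

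For the \emph{if} direction, assume $\gcd(d_a,d_b)=1$. Using the same relabelling, we have $c=(-d_a\;\,0\;\,d_b)$ and $s$ = translation by $1$, with $\gcd(d_a,d_b)=1$. I would first produce a short $3$-cycle: from $c$ and $s^{d_a+d_b}cs^{-(d_a+d_b)} = (0\;\,d_a{+}d_b\;\,d_a{+}2d_b)$ (these share the point... actually one computes an appropriate product of two overlapping conjugates of $c$ to obtain a $3$-cycle of the form $(0\;\,d_a\;\,\ast)$ or similar), and then by a Euclidean-algorithm argument on the "step sizes'' $d_a$ and $d_b$ — exactly as one shows two translations by coprime amounts generate $\Z$ — I would manufacture the $3$-cycle $(0\;\,1\;\,2)$ inside $\langle c, s\rangle$. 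Concretely: repeatedly form commutators/products of overlapping $3$-cycles to replace a pair of available "jump lengths'' $(p,q)$ by $(p, q-p)$ while keeping a $3$-cycle through $0$ with those jumps, driving the pair down to $(1, \cdot)$. Once $(0\;1\;2)\in\langle c,s\rangle$, conjugating by powers of $s$ gives all $(\,j\;\,j{+}1\;\,j{+}2)$, and these generate $\FAlt(\Z)$, so $\Alt(\Z)\le \langle c,s\rangle$; together with $s$ (which maps onto a generator of $G_1/\Alt(\Z)\cong\Z$, being conjugate to $t$) this gives $\langle (a\;0\;b), s\rangle = G_1$.

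The main obstacle is the \emph{if} direction: turning "$\gcd(d_a,d_b)=1$'' into an explicit sequence of group operations producing a short $3$-cycle. The clean way is to isolate the right lemma — that if a permutation group on $\Z$ contains translation-by-$1$ together with a $3$-cycle $(0\;\,p\;\,q)$ with $\gcd(p,q)=1$ (or more robustly $\gcd(p,q,1)$... i.e. the relevant gcd is $1$), then it contains $\Alt(\Z)$ — proved by the overlapping-$3$-cycles Euclidean descent sketched above, where at each step one checks that two $3$-cycles sharing exactly one or two points multiply to a $3$-cycle (or a product of two $3$-cycles from which a $3$-cycle is extracted) with strictly smaller jump data. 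Care is needed that the descent stays within $\langle c, s\rangle$ and that the bookkeeping of which points the intermediate cycles touch is consistent; this is routine but is where all the real content sits.
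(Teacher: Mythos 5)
Your proposal is correct and follows essentially the same route as the paper: relabel $\Z$ along the $s$-orbit so that $s$ becomes $t$ and the $3$-cycle becomes $(-d_a\;0\;d_b)$, then run a Euclidean-algorithm descent on overlapping conjugates to reach $(-1\;0\;1)$ when $\gcd(d_a,d_b)=1$, and exhibit the residue classes modulo $\gcd(d_a,d_b)$ as a block system (which $\Alt(\Z)\le G_1$ cannot preserve) otherwise. The Euclidean step you flag as the "real content" is left at the same level of detail in the paper itself, so there is no gap relative to the paper's own standard.
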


\begin{proof}
We change our perspective by relabelling $\Z$ so to replace $s$ with $t$ and $(a\;0\;b)$ with $(-d_a\;0\;d_b)$. If $\gcd(d_a, d_b)=1$, then apply the Euclidean algorithm to $(-d_a\;0\;d_b)$ using $t$ to obtain $(-1\;0\;m)$ or $(-m\;0\;1)$ for some $m \in \N$, which both then reduce to $(-1\;0\;1)$.

On the other hand, if $\gcd(d_a, d_b)=k>1$, then we claim that the set of torsion elements in $\langle (-d_a\;0\;d_b), t\rangle$ preserve a non-trivial block structure of $\Z$. We work with $\langle (-k\;0\;k), t\rangle$, since this contains $\langle (-d_a\;0\;d_b), t\rangle$. A torsion element in $\langle (-k\;0\;k), t\rangle$ will be a product of $t$-conjugates of $(-k\;0\;k)$ and $(-k\;0\;k)^{-1}$. Then both $(-k\;0\;k)$ and $(-k\;0\;k)^{-1}$ fix all but one of the sets
$$X_i:=\{z\in \Z\;:\;z\equiv i \mod{k}\}\text{ where }i=1, \ldots, k$$
and every $t$-conjugate of $(-k\;0\;k)^{\pm1}$ also has this property.
\end{proof}

The following is a specific case of \cite[Lem. 2.4]{invgenhou}.
\begin{lem} \label{32genG1} Let $\sigma\in \Alt(\Z)$. Then there exists an $\omega\in \FSym(\Z)$  such that $\langle s_\omega, \sigma\rangle=G_1$. Moreover, given any $\tau \in \FSym(\Z)$ with $\supp(\tau)$ disjoint from $\supp(\sigma)\cup\supp(\omega)$, we have that $\langle s_{\tau\omega}, \sigma\rangle=\langle s_{\omega\tau}, \sigma\rangle=G_1$.
\end{lem}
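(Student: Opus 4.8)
The plan is to reduce the whole statement to a single, cycle-type-dependent construction, using that $\FSym(\Z)$ normalises $G_1$. First I would record that for $\omega\in\FSym(\Z)$ the finitely-supported part of $s_\omega=\omega^{-1}t\omega$ is even: writing $s_\omega=(\omega^{-1}\cdot t\omega t^{-1})\,t$, the factor $\omega^{-1}\cdot t\omega t^{-1}$ lies in $\Alt(\Z)$, being a product of the two permutations $\omega^{-1}$ and $t\omega t^{-1}$, each of which has the same parity as $\omega$. Hence $s_\omega\in\Alt(\Z)\,t\subseteq G_1$ and $\langle\Alt(\Z),s_\omega\rangle=\langle\Alt(\Z),t\rangle=G_1$; applying this to $\omega^{-1}$ in place of $\omega$ gives likewise $\langle\Alt(\Z),\omega t\omega^{-1}\rangle=G_1$, i.e.\ $\omega G_1\omega^{-1}=G_1$. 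Conjugating $\langle s_\omega,\sigma\rangle$ by $\omega$ then yields the key equivalence
\[\langle s_\omega,\sigma\rangle=G_1\iff\langle t,\ \omega\sigma\omega^{-1}\rangle=G_1 .\]
As $\omega$ ranges over $\FSym(\Z)$, the element $\omega\sigma\omega^{-1}$ ranges over all permutations in $\Alt(\Z)$ with the same cycle type as $\sigma$. So it suffices to produce, for each such cycle type, one representative $\sigma'$ with $\langle t,\sigma'\rangle=G_1$, and then take $\omega$ to be any element of $\FSym(\Z)$ conjugating $\sigma$ to $\sigma'$. (The statement of course needs $\sigma\neq 1$, which I assume.)

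For the construction I would place the disjoint cycles of $\sigma'$ on integer intervals that are widely and suitably spaced, and produce inside $\langle t,\sigma'\rangle$ a single $3$-cycle satisfying the coprimality hypothesis of Lemma~\ref{3cyclecondition}. The key computational ingredient is the identity
\[(1\;2\;\cdots\;\ell)\,(2\;3\;\cdots\;\ell+1)^{-1}=(1\;\ \ell+1\;\ \ell),\]
valid for every $\ell\geq2$. Indeed, if a length-$\ell$ cycle of $\sigma'$ occupies the interval $\{N+1,\dots,N+\ell\}$ then $t^{-1}(N+1\;\cdots\;N+\ell)\,t=(N+2\;\cdots\;N+\ell+1)$, so the word $\sigma'\cdot(t^{-1}\sigma' t)^{-1}$ contributes the $3$-cycle $(N+1\;\ N+\ell+1\;\ N+\ell)$, whose two gaps are $\ell-1$ and $1$, hence coprime. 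When $\sigma$ is a single cycle — necessarily of odd length $\geq 3$, as $\sigma$ is even and non-trivial — the intervals do not interfere and $\sigma'\cdot(t^{-1}\sigma' t)^{-1}$ is exactly such a $3$-cycle, so Lemma~\ref{3cyclecondition} (with $s=t$, i.e.\ $\omega=\id$) shows that it generates $G_1$ together with $t$, whence $\langle t,\sigma'\rangle=G_1$. When $\sigma$ has several cycles the same word produces a \emph{product} of disjoint $3$-cycles, one per cycle of $\sigma'$, and one must extract from it a single $3$-cycle (still with coprime gaps) by further conjugating by powers of $t$ and multiplying so as to cancel the remaining factors — for instance by choosing the spacings so that a suitable power, or a suitable telescoping product of $t$-translates, annihilates all but one factor.

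The step I expect to be the main obstacle is exactly this last one. Unlike the single-$3$-cycle situation handled by Lemma~\ref{3cyclecondition}, a general even permutation has its cycle type fixed, so one cannot simply pick a convenient one; naive words in $\sigma'$ and its $t$-translates tend to produce products of several disjoint short cycles (or longer cycles) rather than one clean $3$-cycle, and the placements must additionally be arranged to beat the congruence obstruction, i.e.\ so the surviving $3$-cycle has coprime gaps. Pinning down a recipe that is uniform over all cycle types, and checking that it really does force $\Alt(\Z)\le\langle t,\sigma'\rangle$, is where the work lies; I expect no conceptual surprises there, only careful bookkeeping and perhaps a small case split (say, repeated versus distinct parts).

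Finally, I would note that the ``moreover'' clause follows immediately from the reduction. Since $\tau,\omega\in\FSym(\Z)$ we have $\tau\omega,\omega\tau\in\FSym(\Z)$, so the equivalence above applies with $\omega$ replaced by each of them. Put $\sigma':=\omega\sigma\omega^{-1}$; then $\supp(\sigma')=\supp(\sigma)\omega^{-1}\subseteq\supp(\sigma)\cup\supp(\omega)$, so $\supp(\tau)$ is disjoint from both $\supp(\sigma)$ and $\supp(\sigma')$, and hence $\tau$ commutes with each of $\sigma$ and $\sigma'$. Therefore $(\tau\omega)\sigma(\tau\omega)^{-1}=\tau\sigma'\tau^{-1}=\sigma'$ and $(\omega\tau)\sigma(\omega\tau)^{-1}=\omega(\tau\sigma\tau^{-1})\omega^{-1}=\omega\sigma\omega^{-1}=\sigma'$, so $\langle s_{\tau\omega},\sigma\rangle=\langle s_{\omega\tau},\sigma\rangle=\langle t,\sigma'\rangle=G_1$ by the conclusion of the first part.
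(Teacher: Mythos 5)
Your reduction framework is sound and is essentially the paper's first step made explicit: relabel $\Z$ to replace $s_\omega$ by $t$ at the cost of replacing $\sigma$ by a conjugate, and the ``moreover'' clause does follow formally from support-disjointness exactly as you argue. Your single-cycle computation is also correct. But the heart of the lemma is the general case, and there your proposal has a genuine gap that you yourself flag: when $\sigma'$ has several cycles, the word $\sigma'(t^{-1}\sigma' t)^{-1}$ produces one $3$-cycle per cycle of $\sigma'$, and you only gesture at how to ``cancel the remaining factors'' by telescoping $t$-translates. This is not mere bookkeeping: every conjugate $t^{-j}\sigma' t^{j}$ shifts \emph{all} the cycles of $\sigma'$ simultaneously, so products of such translates keep reintroducing interference between the blocks, and no choice of spacings obviously telescopes down to a single $3$-cycle with the required coprime gaps.

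The paper avoids this entirely by conjugating $\sigma_*$ not by a power of $t$ but by a \emph{distant $t$-translate of $\sigma_*$ itself}. Arrange (via the relabelling) that $y:=\max\supp(\sigma_*)$ satisfies $(y-1)\sigma_*=y$, and set $\alpha:=t^{-N}\sigma_* t^{N}$ with $N$ chosen so that $\min\supp(\alpha)=y$; then $\supp(\alpha)\cap\supp(\sigma_*)=\{y\}$, so $\sigma':=\alpha^{-1}\sigma_*\alpha$ agrees with $\sigma_*$ everywhere except that the single point $y$ is replaced by $m:=(y)\alpha>y$. Consequently $\sigma_*(\sigma')^{-1}$ is the single $3$-cycle $(y-1\;y\;m)$, which lies in $\langle t,\sigma_*\rangle$ and has gaps $1$ and $m-y$, so Lemma \ref{3cyclecondition} applies with no coprimality engineering and no case split on cycle type. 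If you want to salvage your draft, replace the $t$-translate in your key identity by this $\alpha$; the rest of your argument then goes through.
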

\begin{proof}
As with the previous lemma, we start by reordering $\Z$ to consider $t$ rather than $s_\omega$, at the cost of then working with $\sigma_*=\gamma^{-1}\sigma\gamma$ rather than $\sigma$. With this perspective, the claims regarding $\tau$ follow immediately.

Let $\sigma_*=\sigma_1\ldots\sigma_n$ be written in disjoint cycle notation and let $\max\{\supp(\sigma_*)\}=\max\{\supp(\sigma_n)\}=y$. We can choose $\gamma$ so that $(y)\sigma_*^{-1}=y-1$.

Conjugate $\sigma_*$ by a power of $t$ to obtain $\alpha$ where $\min\{\supp(\alpha)\}=y$. We can then conjugate $\sigma_*$ by $\alpha$ to obtain $\sigma'$ where, by construction, $(x)\sigma_*=(x)\sigma'$ for all $x<y$.

We will now consider the element $\sigma_*(\sigma')^{-1}$. Recall that $\sigma_*=\sigma_1\ldots\sigma_n$, and, from our construction, $\sigma'=\sigma_1\ldots\sigma_{n-1}\sigma_n'$ where
\begin{equation*}
\sigma_n=(x_1^{(n)}\ldots x_{r_n-1}^{(n)}\;x_{r_n}^{(n)})\text{ and }\sigma_n'=(x_1^{(n)}\ldots x_{r_n-1}^{(n)}\;m)\text{ for some }m>x_{r_n}^{(n)}.
\end{equation*}
Therefore $\sigma_*(\sigma')^{-1}=\sigma_n(\sigma'_n)^{-1}=(x_{r_n-1}^{(n)}\;x_{r_n}^{(n)}\;m)=(y-1\;y\;m)$, which then, together with $t$, generates $G_1$ by Lemma \ref{3cyclecondition}.
\end{proof}
\begin{lem} \label{genG2lem} For any $z\in \Z$, we have that $\langle (2z-1\;0\;2), t^2\rangle=\langle (2z\;1\;3), t^2\rangle=G_2$.
\end{lem}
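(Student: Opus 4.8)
The plan is to prove the single statement $\langle (2z-1\;0\;2), t^2\rangle=G_2$ and observe that $\langle (2z\;1\;3), t^2\rangle=G_2$ follows by a verbatim symmetric argument. Write $H:=\langle (2z-1\;0\;2), t^2\rangle$ and $\mu:=(2z-1\;0\;2)$. Since $t^2\in H$ and $H\le G_2$ trivially, it suffices to show $\Alt(\Z)\le H$. The point of the element $\mu$ is that it has exactly one point ($2z-1$) in the odd orbit of $t^2$ and two \emph{adjacent} points ($0$ and $2$) in the even orbit; conjugating by $t^{2k}\in H$ therefore produces, for every $k\in\Z$, the $3$-cycle $\mu_k:=t^{-2k}\mu t^{2k}=(2z-1+2k\;2k\;2k+2)\in H$, and the supports of the $\mu_k$ sweep out all of $\Z$.

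The first step is the local computation that $\supp(\mu_k)$ and $\supp(\mu_{k+1})$ meet in exactly one point, namely $2k+2$. This is a short parity check: the odd entries $2z-1+2k$ and $2z+1+2k$ are distinct and can equal none of the even entries $2k,\,2k+2,\,2k+4$, and among the even entries only $2k+2$ is common. Consequently $\supp(\mu_k)\cup\supp(\mu_{k+1})$ has five elements, and a classical fact (e.g.\ $\langle(1\;2\;3),(3\;4\;5)\rangle=A_5$) says that two $3$-cycles whose supports overlap in a single point generate the alternating group on the $5$-point union of their supports. Hence $\Alt(S_k)\le H$, where $S_k:=\supp(\mu_k)\cup\supp(\mu_{k+1})$.

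The second step glues these groups together. Consecutive sets $S_k$ and $S_{k+1}$ share the three points of $\supp(\mu_{k+1})$, so $|S_k\cap S_{k+1}|\ge 2$, and the standard fact that $\langle\Alt(A),\Alt(B)\rangle=\Alt(A\cup B)$ whenever $A,B$ are finite with $|A\cap B|\ge 2$ yields, by an immediate induction on $N$, that $\langle\Alt(S_{-N}),\ldots,\Alt(S_N)\rangle=\Alt\bigl(\bigcup_{|k|\le N}S_k\bigr)\le H$. Letting $N\to\infty$ and noting that $\bigcup_{k\in\Z}S_k=\bigcup_{k\in\Z}\supp(\mu_k)=\Z$ — the entries $2z-1+2k$ run over every odd integer and the entries $2k,\,2k+2$ over every even integer — every finitely supported even permutation lies in $\Alt\bigl(\bigcup_{|k|\le N}S_k\bigr)$ for some $N$, so $\Alt(\Z)\le H$ and therefore $H=G_2$. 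For $\langle (2z\;1\;3), t^2\rangle$ one repeats this with $(2z\;1\;3)$ — one even point, two adjacent odd points — in the role of $\mu$ and the two orbits of $t^2$ interchanged.

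I do not expect a genuine obstacle: the argument is elementary once one spots that conjugates of $\mu$ by $t^2$ chain together through the $A_5$-generation and $\Alt$-amalgamation facts. One might hope instead for a quick reduction of the style of Lemma \ref{3cyclecondition}, but since $t^2$ has two infinite orbits no single-orbit relabelling of $\Z$ is available, so the chaining approach seems to be the natural route. The only real care needed is in the bookkeeping of the supports: checking that consecutive $3$-cycles overlap in exactly one point (so that $\langle\mu_k,\mu_{k+1}\rangle$ is genuinely $A_5$ rather than $A_4$ or $C_3\times C_3$) and that every amalgamation step has overlap at least $2$.
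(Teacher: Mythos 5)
Your proof is correct, but it takes a genuinely different route from the one in the paper. The paper's proof is a short explicit computation carried out inside the even orbit of $t^2$: writing $d=2z-1$, it conjugates $(d\;0\;2)$ by $t^{-2}(d\;0\;2)t^2=(d+2\;2\;4)$ to obtain $(d\;0\;4)$, forms the product $(d\;0\;2)(d\;4\;0)=(0\;2\;4)$, uses that to conjugate $(d\;0\;2)$ to $(d\;2\;4)$ and hence, after conjugating back by a power of $t^2$, to $(d-2\;0\;2)$; iterating (in either direction) reduces to $(1\;0\;2)$, at which point the earlier remark that $\langle(1\;2\;3),t^2\rangle=G_2$ finishes the job, and the second generator $(2z\;1\;3)$ is handled by noting $tG_2t^{-1}=G_2$. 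You instead leave the odd point alone and observe that the conjugates $\mu_k=(2z-1+2k\;2k\;2k+2)$ form a chain of $3$-cycles whose consecutive supports meet in exactly one (even) point, so that $\langle\mu_k,\mu_{k+1}\rangle$ is the alternating group on the five-point union, and you then amalgamate these alternating groups along overlaps of size $3$ to exhaust $\Alt(\Z)$. Your parity bookkeeping is right, the two classical facts you invoke (two $3$-cycles overlapping in one point generate $A_5$; $\langle\Alt(A),\Alt(B)\rangle=\Alt(A\cup B)$ when $|A\cap B|\ge2$ and $|A|,|B|\ge3$) are both true and correctly applied, and every set involved has size at least $5$, so there are no degenerate cases. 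The trade-off: the paper's computation is shorter and matches the concrete conjugation style used throughout this section, while your argument is more self-contained, since it derives $\Alt(\Z)\le\langle(2z-1\;0\;2),t^2\rangle$ directly from the definition of $G_2$ without leaning on the unproved ``computations show'' remark that $\langle(1\;2\;3),t^2\rangle=G_2$.
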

\begin{proof} Starting with an arbitrary odd $d\in \Z$, if we conjugate $(d\;0\;2)$ by $t^{-2}(d\;0\;2)t^2$ then we obtain $(d\;0\;4)$. Then $(d\;0\;2)(d\;4\;0)=(0\;2\;4)$ which conjugates $(d\;0\;2)$ to $(d\;2\;4)$. Now $t^2(d\;2\;4)t^{-2}=(e\;0\;2)$ where $e=d-2$, and hence $\langle (d\;0\;2), t^2\rangle$ contains $(1\;0\;2)$ and so $\Alt(\Z)$. For the other case, we  note that $tG_2t^{-1}=G_2$.
\end{proof}
The preceding lemma allows us to generalise the argument of Lemma \ref{32genG1} to $G_2$.
\begin{lem} \label{32genG2} Let $\sigma\in \Alt(\Z)$. Then there exists an $\omega\in \FSym(\Z)$  such that $\langle s_\omega^2, \sigma\rangle=G_2$. Moreover, given any $\tau \in \FSym(\Z)$ with $\supp(\tau)$ disjoint from $\supp(\sigma)\cup\supp(\omega)$, we have that $\langle s_{\tau\omega}^2, \sigma\rangle=\langle s_{\omega\tau}^2, \sigma\rangle=G_2$.
\end{lem}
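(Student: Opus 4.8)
The plan is to mimic the proof of Lemma \ref{32genG1}, but using Lemma \ref{genG2lem} as the generation criterion in place of Lemma \ref{3cyclecondition}. First I would reorder $\Z$ so that $s_\omega$ becomes $t$ (hence $s_\omega^2$ becomes $t^2$), at the cost of replacing $\sigma$ by a conjugate $\sigma_*=\gamma^{-1}\sigma\gamma$; as in the earlier lemma, once this reduction is done the claims about $\tau$ are automatic, since a $\tau$ with support disjoint from $\supp(\sigma)\cup\supp(\omega)$ corresponds under the relabelling to a permutation whose support is disjoint from $\supp(\sigma_*)$ and which commutes appropriately, so $s_{\tau\omega}$ and $s_{\omega\tau}$ translate to $t$ composed with disjointly-supported pieces that do not interfere with the construction.

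Next I would run essentially the same disjoint-cycle bookkeeping. Write $\sigma_*=\sigma_1\cdots\sigma_n$ in disjoint cycle notation, and choose $\gamma$ so that the largest moved point $y=\max\supp(\sigma_*)$ behaves as prescribed — here, rather than arranging $(y)\sigma_*^{-1}=y-1$, I would arrange the analogous parity-correct condition so that the three-cycle produced at the end has the shape required by Lemma \ref{genG2lem}, namely $(2z-1\;0\;2)$ or $(2z\;1\;3)$ up to a power of $t^2$. Concretely, conjugating $\sigma_*$ by a suitable power of $t$ and then by the resulting element (as in the proof of Lemma \ref{32genG1}) yields $\sigma'$ agreeing with $\sigma_*$ below $y$, so that $\sigma_*(\sigma')^{-1}$ is a single three-cycle $(y-1\;y\;m)$ for some $m$; translating this three-cycle by an appropriate even power of $t$ puts it into one of the two forms handled by Lemma \ref{genG2lem}, whence $\langle \sigma_*(\sigma')^{-1}, t^2\rangle=G_2$, and therefore $\langle s_\omega^2,\sigma\rangle=G_2$.

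The one genuinely new wrinkle — and the step I expect to be the main obstacle — is the parity control. In the $G_1$ case every $t$-conjugate of a three-cycle was usable, but for $G_2$ only $t^2$-conjugates are available, and the criterion of Lemma \ref{genG2lem} is sensitive to the residues mod $2$ of the three points. So I must choose $\gamma$ (equivalently, shift the relabelling) so that $y-1$, $y$, and $m$ land in the right congruence classes: after a power of $t^2$, the three-cycle should read $(\text{odd}\;0\;2)$ or $(\text{even}\;1\;3)$. This forces a case split according to the parity of $y$ and possibly a small adjustment of the auxiliary element $\alpha$ used to produce $\sigma'$ (replacing it by $t^{\pm1}\alpha$ or composing with a transposition far away) to fix the parity of $m$; as remarked in the proof of Theorem \ref{almostinfinitespread}, such far-away adjustments can always be absorbed while staying in $\Alt(\Z)$. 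Once the parities are arranged, the rest is the routine Euclidean-algorithm reduction already encapsulated in Lemma \ref{genG2lem}, and the $\tau$-robustness is inherited verbatim from the $G_1$ argument.
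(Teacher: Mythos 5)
Your proposal is correct and follows essentially the same route as the paper: relabel $\Z$ so that $s_\omega^2$ becomes $t^2$ (which settles the $\tau$ claims), extract a single $3$-cycle as $\sigma_*(\sigma')^{-1}$ exactly as in Lemma \ref{32genG1}, and use the freedom in the relabelling $\gamma$ to force the parities required by Lemma \ref{genG2lem}. The paper controls the third point $m$ of that $3$-cycle not by adjusting the auxiliary element $\alpha$ (note that $t^{\pm1}\alpha\notin G_2$, and composing $\alpha$ with a far-away transposition does not change $m=(y)\alpha$) but by additionally insisting that $\min(\supp(\sigma_*))=0$ and $(0)\sigma_*=2$, so that $m=y+2$ automatically has the correct parity; it also treats the case where $\sigma$ is a single $3$-cycle separately, since all of these conditions cannot simultaneously be imposed on a $3$-cycle.
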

\begin{proof} As with the proof of Lemma \ref{32genG1}, we change our perspective and so work with $t^2$ rather than $s_\omega^2$, at the cost of then working with $\sigma_*=\gamma^{-1}\sigma\gamma$ rather than $\sigma$. This again resolves the claims regarding $\tau$.

If $\sigma$ is a $3$-cycle, then we choose $\gamma$ so that $\supp(\sigma_*)=\{0, 1, 2\}$. Otherwise we recall the notation from Lemma \ref{32genG1}, and write $\sigma_*=\sigma_1\ldots\sigma_n$ where
\begin{equation*}
\sigma_i=(x_1^{(i)}\ldots x_{r_i}^{(i)})\text{ for each } i\in\{1, \ldots, n\}
\end{equation*}
and choose $\gamma\in\Alt(\Z)$ so that:
\begin{itemize}
\item $x_{r_n}^{(n)}$ is even and maximal in $\supp(\sigma_*)$;
\item $x_{r_n-1}^{(n)}$ is odd;
\item $x_1^{(1)}=0$ and is minimal in $\supp(\sigma_*)$; and
\item $x_2^{(1)}=2$.
\end{itemize}
Now conjugate $\sigma_*$ by a suitable power of $t^2$ to produce an element $\alpha\in\Alt(\Z)$ with $\min\{\supp(\alpha)\}=x_{r_n}^{(n)}$ and conjugate $\sigma_*$ by $\alpha$ to obtain $\sigma'$. With this construction $\sigma_*(\sigma')^{-1}=(x_{r_n-1}^{(n)}\;x_{r_n}^{(n)}\;x_{r_n}^{(n)}+2)$. Then $\langle \sigma_*(\sigma')^{-1}, t^2\rangle=G_2$ by Lemma \ref{genG2lem}.
\end{proof}

A group $G$ has a dominating set $S$ if for each $g\in G\setminus\{1\}$ there exists an $s\in S$ such that $\langle g, s\rangle=G$. A natural question is whether $G_1$ or $G_2$ has a finite dominating set. Despite the strong result of Theorem \ref{almostinfinitespread}, we find that the groups $G_k$ do not resolve the question of whether there is an infinite group - other than $\Z$ or the Tarski monsters - with a finite total dominating set.
\begin{prop} \label{nofinitedomset} No finite total dominating set exists for $G_1$ or $G_2$. 
\end{prop}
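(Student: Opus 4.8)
The plan is to show that for any finite subset $S\subseteq G_i$ (for $i\in\{1,2\}$), there exists a non-trivial element $g\in G_i$ such that $\langle g,s\rangle\ne G_i$ for every $s\in S$. The natural obstruction to exploit is the one already used in the proof that $s(G_k)=0$ for $k\ge 3$: torsion elements whose support lies in a fixed bounded window cannot help generate $G_i$ with an element $s$ unless $s$ ``mixes'' that window correctly. More precisely, I would first split $S=S_{\mathrm{fin}}\cup S_{\mathrm{inf}}$ where $S_{\mathrm{fin}}=S\cap\Alt(\Z)$ and $S_{\mathrm{inf}}=S\setminus\Alt(\Z)$. Any $s\in S_{\mathrm{fin}}$ generates only a finite group together with any $3$-cycle, so such elements can be discarded: it suffices to defeat $S_{\mathrm{inf}}$, and in fact it suffices to find a single $3$-cycle $g$ with $\langle g,s\rangle\ne G_i$ for all $s\in S_{\mathrm{inf}}$.

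For each $s\in S_{\mathrm{inf}}$, write $s=\omega_s t^{c_s}$ with $c_s\in i\Z\setminus\{0\}$ and $\omega_s\in\Alt(\Z)$. By Remark \ref{orbitobs} there is an $N\in\N$ such that every $s\in S_{\mathrm{inf}}$ acts as the pure translation $x\mapsto x+c_s$ outside $\{-N,\dots,N\}$. The key combinatorial point, isolated in Lemma \ref{3cyclecondition} (and its analogue via Lemma \ref{genG2lem} for $G_2$), is that whether a $3$-cycle $(a\;0\;b)$ generates $G_1$ with $s_\omega$ is governed purely by a coprimality condition on the ``hitting times'' of $a$ and $b$ to $0$ along the $s$-orbit. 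So the strategy is: choose a $3$-cycle $g=(a\;0\;b)$, or more robustly $g=(a\;b\;c)$ with all three points far out in a single infinite orbit pattern, positioned so that for \emph{every} $s\in S_{\mathrm{inf}}$ the relevant gcd is $>1$ — or, even more cleanly, so that $a,b,c$ all lie in a single infinite $s$-orbit for each $s\in S_{\mathrm{inf}}$, forcing $\langle g,s\rangle$ to act imprimitively (exactly as in the $k=3$ spread-zero argument) and hence not contain $\Alt(\Z)$.

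To make this work uniformly over the finite set $S_{\mathrm{inf}}$, I would argue as follows. Each $s\in S_{\mathrm{inf}}$ has finitely many infinite orbits, and on $\N_{>N}$ these orbits are arithmetic progressions with common difference $|c_s|$; in particular the orbit of any sufficiently large point $x$ is determined by $x\bmod c_s$. Pick a large $M\gg N$ and look at the three points $x_1=M$, $x_2=M+L$, $x_3=M+2L$ where $L$ is chosen to be a common multiple of all the $c_s$ (so $L\in\bigcap_s c_s\Z$). Then for every $s\in S_{\mathrm{inf}}$, the points $x_1,x_2,x_3$ are congruent mod $c_s$, hence lie in the same infinite $s$-orbit; moreover, by taking $M$ large we ensure $\{x_1,x_2,x_3\}$ is disjoint from $\{-N,\dots,N\}$ and, since the orbit is a translate-progression there, the cyclic subgroup $\langle s\rangle$ together with $(x_1\;x_2\;x_3)$ generates a group whose orbit structure refines a non-trivial partition of $\Z$ (the $s$-orbits themselves, or the mod-$c_s$ classes out at infinity), so it cannot contain $\Alt(\Z)$. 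Taking $g=(x_1\;x_2\;x_3)$ then works simultaneously against all $s\in S_{\mathrm{inf}}$, and against all $s\in S_{\mathrm{fin}}$ trivially, so $S$ is not a total dominating set. For $G_2$ the same argument applies verbatim, using Lemma \ref{genG2lem} / Lemma \ref{32genG2} in place of Lemma \ref{3cyclecondition} where a generation criterion is needed, and noting $c_s\in 2\Z$.

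The main obstacle I anticipate is handling $s$ whose $c_s$ is large or whose finitely many infinite orbits are ``spread out'': one must be sure that the single chosen triple $\{x_1,x_2,x_3\}$ genuinely lands inside one orbit of \emph{each} $s$ and that the resulting group is provably imprimitive rather than merely ``not obviously all of $G_i$''. The clean way around this is the common-multiple choice of $L$ above, which forces $x_1\equiv x_2\equiv x_3\pmod{c_s}$ for every $s$ at once; combined with $M$ large enough that all $s$ act as honest translations on the relevant window, the imprimitivity (equivalently, the failure of the coprimality condition in Lemma \ref{3cyclecondition}, with common ``period'' $c_s>0$ dividing both hitting times) is automatic. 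Care is also needed because $S$ may contain elements $s$ with $\langle s\rangle$ already large, e.g.\ $c_s=\pm i$; but even then the orbit of $x_1$ under $\langle s\rangle$ is a single arithmetic progression mod $c_s$ out at infinity, and adjoining a $3$-cycle supported within that progression keeps the action imprimitive, so no special case arises. The only genuinely delicate bookkeeping is choosing $M$ and $L$ explicitly in terms of $\max_s|c_s|$ and $N$, which is routine.
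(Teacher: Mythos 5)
Your strategy is essentially the paper's: push a $3$-cycle out beyond the window $\{-N,\dots,N\}$ on which every infinite-order element of $S$ acts as a pure translation, and space its support so that each such element fails the relevant generation criterion, while elements of $\Alt(\Z)$ are defeated for free. (The paper takes the specific cycle $(m\;m+2\;m+4)$ after first discarding elements that do not have exactly $i$ infinite orbits; your common-multiple $L$ is meant to handle all translation lengths at once.) The reduction to $S_{\mathrm{inf}}$, the case of two or more infinite orbits via intransitivity, and the $G_2$ case are all sound.

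There is, however, a genuine flaw in the single-orbit case for $G_1$. Having $x_1,x_2,x_3$ on one infinite orbit of $s$ does \emph{not} by itself force $\langle (x_1\;x_2\;x_3),s\rangle\ne G_1$ when that orbit is all of $\Z$: relabelling so that $s$ becomes $t$, your $3$-cycle becomes (a translate of) $(-d\;0\;d)$ with $d=L/|c_s|$, and Lemma \ref{3cyclecondition} says this fails to generate precisely when $\gcd(d,d)=d>1$. Your choice $L=\mathrm{lcm}_s|c_s|$ can give $d=1$: if every element of $S_{\mathrm{inf}}$ has $|c_s|=1$ (e.g.\ $S=\{t\}$), then $L=1$ and $g=(M\;M+1\;M+2)$, which \emph{does} generate $G_1$ with $t$. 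Relatedly, your parenthetical justification (``$c_s$ dividing both hitting times'') is not the right divisibility --- the hitting times both equal $L/|c_s|$, and what you need is that this common value be at least $2$; and the appeal to the $k=3$ spread-zero argument points at the wrong mechanism, since there the three points lay on three \emph{distinct} orbits, whereas here the relevant obstruction is the mod-$d$ block structure from the second half of Lemma \ref{3cyclecondition}. The fix is one line: take $L=2\,\mathrm{lcm}_s|c_s|$ (or any common multiple strictly exceeding every $|c_s|$), after which $d\ge 2$ for every single-orbit $s$ and the argument closes; with that emendation your proof is correct and matches the paper's in spirit.
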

\begin{proof} Consider the elements in $L=\{(k\;k+2\;k+4)\;:\;k\in \N\}$. Given any finite sets $S_1\subset G_1$ and $S_2\subset G_2$, first produce $S_1^*$ and $S_2^*$ such that  $S_i^*$ contains elements with exactly $i$ infinite orbits. We do this because no element in $S_i\setminus S_i^*$ can generate $G_i$ with any element in $L$.

Recall, by Remark \ref{orbitobs}, that there exists an $m\in \N$ such that each $g\in S_1^*\cup S_2^*$ translates all points $z>m$. Now $(m\;m+2\;m+4)$ cannot generate $G_1$ with any $s \in S_1^*$ by Lemma \ref{3cyclecondition}, and $(m\;m+2\;m+4)$ cannot generate $G_2$ with any $s'\in S_2^*$ since $m$, $m+2$, and $m+4$ all lie on the same orbit of each $s'\in S_2^*$.
\end{proof}

\section{Bounds on the spread and uniform spread of $G_1$ and $G_2$}
Our aim in this final section is to prove the following.
\begin{thm} \label{spreadofG1G2} Let $G_1=\langle (1\;2\;3), t\rangle$ and $G_2=\langle (1\;2\;3), t^2\rangle$. Then $u(G_1)\ge 2$, $u(G_2)\ge 2$, $s(G_1)\le 34$, and $s(G_2)\le 9$.
\end{thm}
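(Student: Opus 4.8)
The plan is to prove the four assertions in two groups: the lower bounds $u(G_1)\ge 2$, $u(G_2)\ge 2$ via the conjugacy class $[t]$ (resp. $[t^2]$), and the upper bounds $s(G_1)\le 34$, $s(G_2)\le 9$ by exhibiting an explicit tuple of nontrivial elements that no single $h$ can simultaneously generate the group with.

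For the lower bounds, I would take two arbitrary nontrivial elements $g_1,g_2\in G_1\setminus\{1\}$ and seek $h\in[t]$ with $\langle g_1,h\rangle=\langle g_2,h\rangle=G_1$. The natural split is by whether each $g_i$ lies in $\Alt(\Z)$ or not. If both $g_i\notin\Alt(\Z)$, Theorem~\ref{almostinfinitespread} already gives a common $h\in[t]$, so the work is when (at least) one $g_i\in\Alt(\Z)$. For such a $g_i$, which is a nontrivial even finite permutation, I would apply Lemma~\ref{32genG1}: there is $\omega_i\in\FSym(\Z)$ with $\langle s_{\omega_i},g_i\rangle=G_1$. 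The key is then to reconcile the (possibly) two different conjugators $\omega_1,\omega_2$ — and, in the mixed case, the conjugator $\sigma$ produced by Theorem~\ref{almostinfinitespread} — into a single element of $[t]$. This is exactly what the "moreover" clauses of Lemma~\ref{32genG1} and Remark~\ref{avoiding} are designed for: one arranges the supports of the relevant conjugators to be pairwise disjoint (pushing later ones far out along $\N$, which is harmless since $g_1,g_2$ have bounded support, or in the mixed case invoking Remark~\ref{avoiding} to avoid $\supp(\omega_i)$ and $\supp(\sigma)$), and then takes $h=s_{\omega}$ for $\omega$ an appropriate product; disjointness of supports guarantees $\langle s_\omega,g_i\rangle$ still contains the subgroup forcing $G_1$. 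Since $h\in[t]$, this shows $s_{[t]}(G_1)\ge 2$, hence $u(G_1)\ge 2$; the $G_2$ case is identical using Lemma~\ref{32genG2} and $[t^2]$, Theorem~\ref{almostinfinitespread} again covering the $g_i\notin\Alt(\Z)$ parts.

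For the upper bounds, the idea is to find a fixed list $g_1,\dots,g_N\in G\setminus\{1\}$ such that every $h\in G$ fails to generate $G$ with at least one $g_j$, which forces $s(G)\le N-1$. The cheapest obstructions come from finite-order elements (as the introduction flags). For any candidate $h$: if $h\in\Alt(\Z)$ then $\langle g_j,h\rangle$ is finite, so a single $g_1\in\Alt(\Z)$ kills all such $h$; thus we may assume $h=\omega t^{m}$ (or $\omega t^{2m}$ for $G_2$) with $\omega\in\Alt(\Z)$ and $m\ne 0$. Now $h$ has a bounded number of infinite orbits and, by Remark~\ref{orbitobs}, acts as a fixed translation outside a finite window $\{-z,\dots,z\}$; the trouble is that $z$ depends on $h$. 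To defeat this I would use $3$-cycles of the form $(k\;k+c\;k+2c)$ supported far out in $\N$ (as in the proof of Proposition~\ref{nofinitedomset}), so that on that region $h$ is a genuine translation and Lemma~\ref{3cyclecondition} (resp. Lemma~\ref{genG2lem}) gives a precise $\gcd$-type or same-orbit obstruction. For $G_2$ the three points of $(k\;k+2\;k+4)$ always lie in one orbit of $t^2$, which kills $h$ of translation length $\pm2$ in one stroke; combining a fixed family of such $3$-cycles (one forcing the $\Alt(\Z)$ case, a few handling small translation lengths, and one or two more handling where the windows of a finite committee of orbit-patterns could sit) one assembles $9$ elements of $G_2$, no $h$ surviving all of them, giving $s(G_2)\le 9$. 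For $G_1$ the $\gcd$ condition of Lemma~\ref{3cyclecondition} is subtler (a $3$-cycle $(a\;0\;b)$ generates with $s_\omega$ iff the two "distances to $0$" are coprime), so more $3$-cycles — $34$ of them — are needed to cover all residual cases of translation length and window position; the precise bookkeeping of how many $3$-cycles with carefully chosen gaps are needed to rule out every possible $h$ is the combinatorial heart of this half.

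The main obstacle is the upper-bound construction for $G_1$: unlike $G_2$, where "three points on one $t^2$-orbit" is a clean single obstruction, for $G_1$ one must simultaneously defeat all translation lengths $m\in\Z\setminus\{0\}$ and all positions of the finite exceptional window of $h$, using only the coprimality criterion of Lemma~\ref{3cyclecondition}; getting this down to a concrete finite list and verifying no $h$ escapes — i.e. proving the bound $34$ rather than merely "some finite number" — is where the real care lies. (For the stated $s(G_1)\le 9$ in the abstract versus $\le 34$ in Theorem~\ref{spreadofG1G2}, I would follow the sharper argument that yields $34$; any improvement to $9$ would require a cleverer choice of the test tuple exploiting interactions between several $3$-cycles at once.)
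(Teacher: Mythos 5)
Your overall architecture for the lower bounds matches the paper's (split by membership in $\Alt(\Z)$; use Theorem \ref{almostinfinitespread} when both elements lie outside, and Lemma \ref{32genG1}/\ref{32genG2} plus Remark \ref{avoiding} in the mixed case), but there is a genuine gap in the remaining case, which is the technical heart of the theorem: $g_1,g_2\in\Alt(\Z)$ with $\supp(g_1)\cap\supp(g_2)\ne\emptyset$. Your plan is to take the conjugators $\omega_1,\omega_2$ supplied by Lemma \ref{32genG1} for $g_1$ and $g_2$ separately and reconcile them by making supports disjoint. That cannot work here: the ``moreover'' clause of Lemma \ref{32genG1} only tolerates a factor $\tau$ whose support avoids $\supp(\sigma)\cup\supp(\omega)$, whereas the conjugator produced for $g_1$ necessarily has support meeting $\supp(g_1)$, hence meeting $\supp(g_2)$ when the supports overlap. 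A single $h=s_\omega$ amounts to a single relabelling of $\Z$, which must put $g_1$ and $g_2$ into generating position simultaneously; you cannot ``push them apart.'' The paper resolves this with Lemma \ref{genwith2elements} for $G_1$ (e.g.\ when $\supp(g_1)=\supp(g_2)$ it inserts a large prime displacement and invokes the coprimality criterion of Lemma \ref{3cyclecondition}) and, for $G_2$, a substantially longer proposition built on the even/odd partition of $\Z$ and Lemma \ref{evensandodds}, ending in a case analysis. None of this is present or replaceable by the disjoint-support trick.

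The upper bounds have a quantifier problem. For an upper bound on spread the tuple $g_1,\dots,g_N$ is fixed first and $h$ is chosen second, so you cannot place your $3$-cycles ``far out in $\N$ so that $h$ acts there as a genuine translation'': an adversarial $h$ can position its exceptional window (Remark \ref{orbitobs}) exactly over your supports. (That placement trick is valid for Proposition \ref{nofinitedomset}, where the quantifiers are reversed.) The paper instead uses a pigeonhole argument on one small window: take the $10$ three-cycles (up to inversion) supported in $\{1,\dots,5\}$. For any $h$, either three of these five points miss the infinite orbit(s) of $h$, or (for $G_2$) three lie on the same infinite orbit -- in both cases the corresponding $3$-cycle together with $h$ is intransitive -- or (for $G_1$, single infinite orbit) three of the five occupy positions of equal parity along that orbit and the $3$-cycle fails the $\gcd$ condition of Lemma \ref{3cyclecondition}. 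This gives $10$ elements with no common mate, hence $s\le 9$ for both groups; your plan of $9$ elements would in any case only certify $s\le 8$ if it worked (off by one), and the bound $34$ in the statement is the weaker, not the sharper, one -- the paper's Lemma \ref{lemG1ub} actually proves $s(G_1)\le 9$.
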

Our proof naturally breaks into four parts. To show that $u(G_1)\ge 2$, we show that given any $g_1, g_2 \in G_1$ there exists $s\in [t]$ which generates $G_1$ with each of $g_1$ and $g_2$. In a similar way, but using $[t^2]$ rather than $[t]$, we show that $u(G_2)\ge 2$. The upper bounds for $s(G_1)$ and $s(G_2)$ are found in order to illustrate that these groups have finite spread, and so are unlikely to be optimal.
\begin{not*} Given $g, h \in G$, let $g^h:=h^{-1}gh$.
\end{not*}
\begin{lem} \label{genwith2elements} Let $g_1, g_2\in \Alt(\Z)$. Then there exists an $h\in [t]\subset G_1$ such that $\langle h, g_1\rangle=\langle h, g_2\rangle=G_1$.
\end{lem}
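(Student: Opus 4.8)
The plan is to pass, via a relabeling of $\Z$, to a question about generation with $t$, and then to feed $g_1$ and $g_2$ into Lemma \ref{32genG1} (and Lemma \ref{3cyclecondition}), exploiting the flexibility encoded in their ``moreover'' clauses. We may assume $g_1,g_2\neq 1$ (otherwise $\langle h,g_i\rangle$ is cyclic for every $h$). First, two reductions. Since every element of $G_1$ has the form $\omega t^n$ with $\omega\in\FSym(\Z)$, and $t^{-n}s_\omega t^n$ is just $s_\omega$ read off a shifted copy of $\Z$, it suffices to produce an \emph{even} $\omega\in\FSym(\Z)$ with $\langle s_\omega,g_1\rangle=\langle s_\omega,g_2\rangle=G_1$; and parity is not an obstruction, because if such an $\omega$ is odd then, taking a transposition $\tau=(x\ x+1)$ with $x$ so large that $\supp(\tau)$ misses $\supp(g_1)\cup\supp(g_2)\cup\supp(\omega)$, the moreover clause of Lemma \ref{32genG1} gives $\langle s_{\tau\omega},g_i\rangle=G_1$ for $i=1,2$ while $\tau\omega$ is even. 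Secondly, conjugating the whole picture by $\omega$ turns $s_\omega$ into $t$ and each $g_i$ into a finitary-relabelled copy $\tilde g_i\in\Alt(\Z)$, so the task becomes: find a finitary relabeling of $\Z$ for which $\langle t,\tilde g_1\rangle=\langle t,\tilde g_2\rangle=G_1$.

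From the proof of Lemma \ref{32genG1} I would isolate the following sufficient condition: if $\sigma\in\Alt(\Z)\setminus\{1\}$ is in \emph{normal form}, meaning its largest support point $y$ satisfies $(y)\sigma^{-1}=y-1$, then the conjugation/commutator computation there places a $3$-cycle $(y-1\ y\ m)$ with $m>y$ inside $\langle t,\sigma\rangle$, and (after conjugating by a power of $t$) Lemma \ref{3cyclecondition} shows this $3$-cycle generates $G_1$ with $t$; hence $\langle t,\sigma\rangle=G_1$. Moreover a finitary relabeling can put $\sigma$ into normal form with $\supp(\sigma)$ sent wherever we wish. The easy case is then $\supp(g_1)\cap\supp(g_2)=\emptyset$: choose the relabeling to send $\supp(g_1)$ into normal form on an interval far to the right and $\supp(g_2)$ into normal form on a disjoint interval still further right. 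As the two relevant regions are disjoint, both $\tilde g_i$ are simultaneously in normal form, so we are done. (Equivalently: apply Lemma \ref{32genG1} to $g_1$, obtaining $\omega_1$, then apply it to $g_2$ with the relabeling's support chosen disjoint from $\supp(g_1)\cup\supp(\omega_1)$ — possible since $\supp(g_2)$ is disjoint from $\supp(g_1)$ — and combine via the moreover clause.)

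A second case I would dispose of directly is when $g_1$ agrees with $g_2$, or with $g_2^{-1}$, at some point of $\supp(g_1)$; note we are free to replace $g_2$ by $g_2^{-1}$ at any time since $\langle h,g_2\rangle=\langle h,g_2^{-1}\rangle$. Say $(x)g_1=(x)g_2=:q$ with $x\in\supp(g_1)$. Choose the relabeling so that $x\mapsto Y-1$ and $q\mapsto Y$, where $Y$ exceeds the image of every other point of $\supp(g_1)\cup\supp(g_2)$. Then $Y$ is the top support point of both $\tilde g_1$ and $\tilde g_2$, with common predecessor $Y-1$, so both are in normal form at once and $\langle t,\tilde g_1\rangle=\langle t,\tilde g_2\rangle=G_1$.

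The remaining, and I expect the hardest, case is when $\supp(g_1)$ and $\supp(g_2)$ overlap but $g_1$ agrees with neither $g_2$ nor $g_2^{-1}$ at any point of $\supp(g_1)$: then no relabeling can force both $\tilde g_i$ into normal form sharing a top support point, and the clean argument breaks. Here I would abandon the ``top support point'' normal form and instead produce the witnessing $3$-cycles directly, as small commutators of $\tilde g_i$ with a $t$-conjugate of itself (the kind of computation underlying Lemma \ref{32genG1}); such a $3$-cycle generates $G_1$ with $t$ precisely when its support $\{a<b<c\}$ has $\gcd(b-a,\,c-b)=1$, which by Lemma \ref{3cyclecondition} is orientation-free. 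The point to push through is that a single finitary relabeling has enough room to arrange coprime consecutive gaps for both extracted supports simultaneously, even though the supports of $g_1$ and $g_2$ interact. Verifying this — that the flexibility in Lemmas \ref{32genG1} and \ref{3cyclecondition} is genuinely sufficient in the overlapping case — is the step I expect to be the main obstacle.
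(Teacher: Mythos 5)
Your reduction to finding an even finitary relabelling with $\langle t,\tilde g_1\rangle=\langle t,\tilde g_2\rangle=G_1$, the parity fix by a distant transposition, and the ``top support point'' normal form extracted from Lemma \ref{32genG1} are all sound, and your first two cases go through. But the argument is incomplete: the case you leave open at the end --- overlapping supports with $g_1$ agreeing with neither $g_2$ nor $g_2^{-1}$ at any point of $\supp(g_1)$ --- is exactly where the content of the lemma sits (for instance $g_1=(1\;2\;3\;4\;5)$ and $g_2=g_1^2$ land there), and ``arrange coprime consecutive gaps for both extracted supports simultaneously'' names the difficulty rather than resolving it. Your intermediate claim that in this case no relabelling can put both elements into normal form is also too hasty: the two normal forms need not share a top point, since one element can be anchored in max-normal form at the top of the combined support while the other is anchored in the mirror-image min-normal form (minimal support point $\alpha$ with $(\alpha)\tilde g=\alpha+1$) at the bottom.

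The paper closes this case by splitting instead on whether $\supp(g_1)=\supp(g_2)$. When the supports differ, a point $\alpha\in\supp(g_1)\setminus\supp(g_2)$ is used to anchor $g_1^{\omega}$ in min-normal form at the bottom without constraining $g_2$, and $g_2^{\omega}$ is then normalised independently (at the top, or at the consecutive minimal point $\beta=\alpha+1$ when $(\alpha)g_1\in\supp(g_2)$). When the supports coincide (and have size at least $4$, the size-$3$ case being trivial), both extracted $3$-cycles are forced to share the top point $y'$, and the missing idea is to use the full strength of Lemma \ref{3cyclecondition} rather than insisting on a gap of $1$: arrange $(y')(g_1^{\omega})^{-1}=y'-1$ so that $g_1^{\omega}$ yields $(y'-1\;y'\;m)$; record $d$ with $(y')(g_2^{\omega})^{-1}=y'-d$; and then place a spare support point $a$ (taken minimal in the relabelled support) at distance a large prime $p$ from its image under $g_2^{\omega}$, with $\gcd(d,p)=1$, so that $g_2^{\omega}$ yields $(y'-d\;y'\;y'+p)$, which generates $G_1$ with $t$ by coprimality. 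Some such coprimality device is unavoidable in your remaining case, and it is absent from your sketch.
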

\begin{proof} By reordering $\Z$, we can work with $t$ rather than $h$ and aim to find an $\omega\in\Alt(\Z)$ such that $\langle g_1^\omega, t\rangle=\langle g_2^\omega, t\rangle=G_1$.

First assume that $\supp(g_1)=\supp(g_2)$. If $|\supp(g_1)|=3$, then $g_1\in\{g_2, g_2^{-1}\}$ and so we are done. Otherwise, let $y=\max\{\supp(g_1)\}$, and define $x_1:=(y)g_1^{-1}$, $x_2:=(y)g_2^{-1}$, and fix some $a\in \supp(g_1)\setminus\{x_1, x_2, y\}$. Choose $\omega\in \Alt(\Z)$ so that:
\begin{itemize}
\item $(y)\omega=:y'$ is still maximal in $\supp(g_1^\omega)$;
\item $a$ is minimal in $\supp(g_1^\omega)$; and
\item $(x_1)\omega=y'-1$.
\end{itemize}
Applying the steps in the proof of Lemma \ref{32genG1} to $g_1^\omega$ then yields $(y'-1\;y'\;m)$ for some $m>y'$, and so Lemma \ref{3cyclecondition} implies that $\langle g_1^\omega, t\rangle=G_1$. We note that there is also some $d\in \N$ such that $(y')t^{-d}=(x_2)\omega$. Given $b_2:=(a)g_2^\omega$, we can therefore adjust $\omega$ so that it moves $a$ arbitrarily far away from the other points in $\supp(g_1^\omega)$ i.e\ we can assume that $(a)t^p=b_2$, where $p$ is a suitably large prime number so that $\gcd(d, p)=1$. Again, applying the steps from the proof of Lemma \ref{32genG1} will yield $(y'-d\;y'\;y'+p)$ and so $\langle g_2^\omega, t\rangle\ge\langle (-d\;0\;p), t\rangle=G_1$ by Lemma \ref{3cyclecondition}.

If $\supp(g_1)\ne\supp(g_2)$ then, without loss of generality, $ \supp(g_1)\setminus \supp(g_2)\ne\emptyset$ and we can fix some $\alpha$ in this set. Imagine that $\{\alpha g_1^k\;:\;k\in \N\}\cap \supp(g_2)=\emptyset$. By choosing $\omega\in\FSym(\Z)$ such that $(\alpha)g_1^\omega=\alpha+1$ and $\alpha=\min(\supp(g_1^\omega)\cup\supp(g_2^\omega))$, we have that $\langle t, g_1^\omega\rangle=G_1$. Thus choose $\omega$ so that some point $y$ in $\supp(g_2^\omega)$ is maximal in $\supp(g_1^\omega)\cup\supp(g_2^\omega)$, and set $(y)(g_2^\omega)^{-1}=y-1$.

Now let $\alpha\in \supp(g_1)\setminus \supp(g_2)$ but assume that $(\alpha)g_1^\omega=\beta\in \supp(g_2^\omega)$ where $\alpha$ is minimal in $\supp(g_1^\omega)\cup\supp(g_2^\omega)$ and $\beta=\alpha+1$. Thus $\langle t, g_1^\omega\rangle=G_1$ but also, by construction, $\beta$ is minimal in $\supp(g_2^\omega)$. We can therefore choose $\omega$ so that $(\beta)g_2^\omega=\beta+1$, so that $\langle t, g_2^\omega\rangle=G_1$.
\end{proof}

\begin{lem} \label{spreadG1lb} The group $G_1$ has uniform spread at least 2.
\end{lem}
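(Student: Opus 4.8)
The plan is to prove the stronger assertion $s_{[t]}(G_1)\ge 2$, where $[t]$ denotes the conjugacy class of $t$ in $G_1$; since $[t]$ is a conjugacy class this gives $u(G_1)\ge s_{[t]}(G_1)\ge 2$. (As with $s(G)$ and $u(G)$, we read $s_X(G)$ with the arguments ranging over $G\setminus\{1\}$; on a non-cyclic group the literal reading is vacuous.) So fix $g_1,g_2\in G_1\setminus\{1\}$ and split into cases according to how many of $g_1,g_2$ lie in $\Alt(\Z)$. If both lie in $\Alt(\Z)$, Lemma \ref{genwith2elements} directly supplies an $h\in[t]\subset G_1$ with $\langle h,g_1\rangle=\langle h,g_2\rangle=G_1$. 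If neither lies in $\Alt(\Z)$, Theorem \ref{almostinfinitespread} with $k=1$ and $n=2$ supplies an $h\in[t]\subset G_1$ with $\langle h,g_1\rangle=\langle h,g_2\rangle=G_1$. The only substantial case is the mixed one, where after relabelling $g_1\in\Alt(\Z)\setminus\{1\}$ and $g_2\in G_1\setminus\Alt(\Z)$.

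For the mixed case the idea is to splice together the conjugate of $t$ that Lemma \ref{32genG1} provides for $g_1$ with the one that Theorem \ref{almostinfinitespread} provides for $g_2$, using that both constructions allow the conjugating permutation to be post- or pre-composed with a disjointly supported permutation. First apply Lemma \ref{32genG1} to $\sigma:=g_1$ to get $\omega_1\in\FSym(\Z)$ with $\langle s_{\omega_1},g_1\rangle=G_1$; replacing $\omega_1$ by $\omega_1\,(N\;N+1)$ for some large $N$ with $\{N,N+1\}$ disjoint from $\supp(g_1)\cup\supp(\omega_1)$ (legitimate by the ``moreover'' clause of Lemma \ref{32genG1}, and needed so that the resulting conjugate of $t$ lies in the $G_1$-conjugacy class $[t]$ and not merely in some $H_2$-class), we may assume $\omega_1\in\Alt(\Z)$. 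Put $F:=\supp(g_1)\cup\supp(\omega_1)$.

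Now apply Theorem \ref{almostinfinitespread} with $k=n=1$ to the single element $g_2$, together with Remark \ref{avoiding}, using both of its freedoms: that $\supp(\sigma_0)$ may avoid any prescribed finite set (take it to contain $F$, which in particular forces $\supp(\sigma_0)\cap\supp(\omega_1)=\emptyset$), and that for any prescribed $\tau\in\Alt(\Z)$ both $s_{\sigma_0\tau}$ and $s_{\tau\sigma_0}$ still generate $G_1$ with $g_2$ once $\supp(\sigma_0)$ is pushed far enough out (take $\tau=\omega_1$). This yields $\sigma_0\in\Alt(\Z)$ with $\supp(\sigma_0)\cap F=\emptyset$ and $\langle s_{\omega_1\sigma_0},g_2\rangle=\langle s_{\sigma_0\omega_1},g_2\rangle=G_1$. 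Set $h:=s_{\omega_1\sigma_0}$. Since $\omega_1\sigma_0\in\Alt(\Z)\le G_1$ we have $h\in[t]$; it generates $G_1$ with $g_2$ by the previous sentence, and it generates $G_1$ with $g_1$ because $\supp(\sigma_0)$ is disjoint from $\supp(g_1)\cup\supp(\omega_1)$, so the ``moreover'' clause of Lemma \ref{32genG1} applied with $\tau:=\sigma_0$ gives $\langle s_{\omega_1\sigma_0},g_1\rangle=G_1$. This completes the mixed case and hence the proof.

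I expect the work to be bookkeeping rather than a new idea. One must check that the flexibility of Lemma \ref{32genG1} survives the parity correction of $\omega_1$ (it does: composing two disjointly supported corrections is again such a correction, so the ``moreover'' clause still applies to $\tau$'s disjoint from the enlarged support), that the finite set $F$ can be pinned down before invoking Theorem \ref{almostinfinitespread}/Remark \ref{avoiding} (it can, since that result lets $\supp(\sigma_0)$ be pushed arbitrarily far out), and --- the genuinely essential point --- that it is the \emph{composition} part of Remark \ref{avoiding}, not just its ``avoid a finite set'' part, that is used, since $h$ must be built from $\omega_1$ and $\sigma_0$ simultaneously rather than from $\sigma_0$ alone. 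The two non-mixed cases are immediate appeals to Lemma \ref{genwith2elements} and Theorem \ref{almostinfinitespread}.
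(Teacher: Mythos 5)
Your proof is correct and follows essentially the same route as the paper: the two pure cases are handled by Lemma \ref{genwith2elements} and Theorem \ref{almostinfinitespread}, and the mixed case splices the $\omega$ from Lemma \ref{32genG1} with the $\sigma$ from Theorem \ref{almostinfinitespread} via Remark \ref{avoiding} and the ``moreover'' clause. Your explicit parity correction making the conjugator even (so that $h$ really lies in the $G_1$-class $[t]$) is a detail the paper's proof glosses over, but it is handled correctly and does not change the argument.
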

\begin{proof} Let $g_1, g_2 \in G_1$. We will show that there exists $\alpha \in \FSym(\Z)$ such that $\langle s_\alpha, g_1\rangle = \langle s_\alpha, g_2\rangle=G_1$. The case where $g_1, g_2 \in \Alt(\Z)$ is dealt with in the previous lemma. If $g_1, g_2 \in G_1\setminus\Alt(\Z)$, then Theorem \ref{almostinfinitespread} implies the result. Finally, if $g_1 \in \Alt(\Z)$ and $g_2\in G_1\setminus\Alt(\Z)$, then we can apply Lemma \ref{32genG1} so to find $s_\omega$ which generates $G_1$ with $g_1$. This lemma also states that choosing any $\tau\in \FSym(\Z)$ such that $\supp(\tau)$ is disjoint from $\supp(\omega)\cup\supp(g_1)$ will result in $\langle s_{\tau\omega}, g_1\rangle=G_1$. Theorem \ref{almostinfinitespread} and Remark \ref{avoiding} together ensure the existence of an element $\alpha=\sigma\omega\in\Alt(\Z)$ so that $\langle g_1, s_\alpha\rangle=\langle g_2, s_\alpha\rangle=G_1$.
\end{proof}
We now show that $s(G_2)\ge 2$. As we did for $G_1$, we deal separately with the case where the given elements are both in $\Alt(\Z)$. We do approach $G_2$ differently however, since the methods we used for $G_1$ appear to be too weak. This is because the conditions for a $3$-cycle to generate $G_2$ with $t^2$ are more restrictive than the conditions for a $3$-cycle to generate $G_1$ with $t$. Our approach takes some inspiration from \cite[Cor. 3.04]{spread}, by partitioning $\Z$ into two parts and working separately with each. We partition $\Z$ into even and odd numbers, which we use because these are the two orbits of $t^2$.
\begin{not*} Let $2\Z$ and $2\Z+1$ denote the sets of even and odd integers, respectively.
\end{not*}
Our first step is to show that we can focus on generating $\Alt(2\Z+1)$ rather than $\Alt(\Z)$. We will then use this simplification in order to show that $u(G_2)\ge2$.
\begin{lem} \label{evensandodds} If $\sigma\in \Alt(\Z)$ has an orbit containing both an even and an odd integer, then $\langle \sigma, \Alt(2\Z+1), t^2\rangle =G_2$.
\end{lem}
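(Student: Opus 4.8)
The plan is to show that $H:=\langle\sigma,\Alt(2\Z+1),t^2\rangle$ contains $\Alt(\Z)$. Once this is established we are done: $t^2\in H$ gives $G_2=\langle\Alt(\Z),t^2\rangle\le H$, while $\sigma\in\Alt(\Z)\le G_2$ gives $H\le G_2$, so $H=G_2$. The only role of the hypothesis on $\sigma$ is to let us manufacture, inside $H$, a single $3$-cycle that moves an even integer; after that, $\Alt(2\Z+1)$ and $t^2$ do the rest.

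First I would locate an even integer that $\sigma$, or $\sigma^{-1}$, sends to an odd integer. Since $\sigma\in\Alt(\Z)$ has finite support, each of its orbits is finite; an orbit meeting both $2\Z$ and $2\Z+1$, read off as a cyclic sequence, must change parity at some consecutive pair, so there is an integer of opposite parity to its $\sigma$-image. Replacing $\sigma$ by $\sigma^{-1}$ if necessary (this still lies in $H$ and has the same orbits), we may assume there is an even $x$ with $y:=(x)\sigma$ odd. Now pick distinct odd integers $z,w\notin\supp(\sigma)$. Then $(y\;z\;w)\in\Alt(2\Z+1)\le H$, and since $\sigma$ and $\sigma^{-1}$ lie in $H$, so does the conjugate $\sigma(y\;z\;w)\sigma^{-1}=((y)\sigma^{-1}\;(z)\sigma^{-1}\;(w)\sigma^{-1})=(x\;z\;w)$, where we used $(y)\sigma^{-1}=x$ and that $\sigma$ fixes $z$ and $w$.

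Finally I would propagate this $3$-cycle. Conjugating $(x\;z\;w)$ by $\Alt(2\Z+1)$ produces every $3$-cycle $(x\;z'\;w')$ with $z',w'$ distinct odd integers, so $\langle(x\;z\;w),\Alt(2\Z+1)\rangle$ is the (finitary) alternating group on $\{x\}\cup(2\Z+1)$ — this is the standard fact that adjoining to $\Alt(\Delta)$, with $\Delta$ infinite, a $3$-cycle on two points of $\Delta$ together with one point $p\notin\Delta$ yields $\Alt(\Delta\cup\{p\})$. Conjugating this subgroup by the powers of $t^2$, which fix $2\Z+1$ setwise and send $x$ to $x+2$, gives the alternating group on $\{x+2k\}\cup(2\Z+1)$ for every $k\in\Z$; since $x$ is even these touch every even integer, and the analogous gluing fact for alternating groups on sets with infinite overlap shows that the subgroup they generate is $\Alt(\Z)$. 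Hence $\Alt(\Z)\le H$, as required.

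The only step carrying real content is the parity-flip observation in the second paragraph: it is essential that a full \emph{orbit} of $\sigma$ — not merely its support — meets both residue classes, since this is exactly what guarantees an iterate of $\sigma$ carrying an even integer onto an odd one. Everything afterwards is bookkeeping with the conjugation rule $g^{-1}(a\;b\;c)g=((a)g\;(b)g\;(c)g)$ and routine facts about finitary alternating groups.
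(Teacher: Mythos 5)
Your proof is correct, but it runs in the opposite direction to the paper's. The paper argues ``top--down'': it normalises $\sigma$ to a $\sigma_*$ carrying $0$ into $2\Z+1$, and then, given an arbitrary $\omega\in\Alt(\Z)$, performs a sequence of conjugations (by powers of $t^2$, by elements of $\Alt(2\Z+1)$, and by $\sigma_*$) each of which strictly decreases $|\supp(\omega)\cap 2\Z|$, so that after finitely many steps $\omega$ is exhibited as an $H$-conjugate of an element of $\Alt(2\Z+1)$ and hence lies in $H$. You instead argue ``bottom--up'': the parity-flip observation extracts from $\sigma$ a single $3$-cycle $(x\;z\;w)$ in $H$ with $x$ even and $z,w$ odd, and you then grow $\Alt(\Z)$ out of this one element by conjugating with $\Alt(2\Z+1)$ (using its high transitivity on the odd integers) and with powers of $t^2$, invoking the standard gluing facts for finitary alternating groups on sets with large overlap. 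Both arguments are elementary and of comparable length; yours isolates more cleanly the single piece of information actually used about $\sigma$ (an even point mapped to an odd point), and in fact the cyclic-sequence argument already produces an even-to-odd transition directly, so the replacement of $\sigma$ by $\sigma^{-1}$ is not even needed. The paper's reduction has the mild advantage of never appealing to transitivity or gluing lemmas, only to an explicit support count that drops by one at each step.
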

\begin{proof} First, conjugate $\sigma$ by a suitable power of $t^2$ to produce $\sigma_*$, where $\sigma_*$ sends $0$ to a point in $2\Z+1$ and $\sigma_*$ sends no negative even number to $2\Z+1$. Now, given $\omega \in \Alt(\Z)$ with $|\supp(\omega)\cap2\Z|=k>0$, we consider the following steps:
\begin{enumerate}[i.]
\item conjugate $\omega$ by a suitable power of $t^2$ to produce $\omega'$ so that $0\in\supp(\omega')$ and $\supp(\omega')\cap 2\N=\emptyset$;
\item conjugate $\omega'$ by an appropriate element of $\Alt(2\Z+1)$ to produce $\omega_*$ with $\supp(\sigma_*)\cap\supp(\omega_*)\subset 2\Z$;
\item conjugate $\omega_*$ by $\sigma_*$ to produce $\hat{\omega}$.
\end{enumerate}
Note that $|\supp(\hat{\omega})\cap2\Z|=|\supp(\omega)\cap2\Z|-1$. Iterating these steps therefore results in an element in $\Alt(2\Z+1)$.
\end{proof}

\begin{prop} Given any $g_1, g_2\in \Alt(\Z)$, there exists $s\in [t^2]$ which generates $G_2$ with both $g_1$ and $g_2$.
\end{prop}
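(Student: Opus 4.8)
The plan is to reduce, via a change of perspective, to finding an $\omega \in \Alt(\Z)$ such that $\langle g_1^\omega, t^2 \rangle = \langle g_2^\omega, t^2 \rangle = G_2$, so that $s = s_\omega^2 = \omega^{-1} t^2 \omega \in [t^2]$ works. The natural strategy, following the partition of $\Z$ into $2\Z$ and $2\Z+1$ (the two orbits of $t^2$), is to first arrange that each $g_i^\omega$ has an orbit meeting both $2\Z$ and $2\Z+1$, and then arrange that the even and odd parts of $g_1^\omega$ and $g_2^\omega$ are in ``general position'' so that we can run the argument of Lemma \ref{32genG2} simultaneously for both elements. By Lemma \ref{evensandodds}, once $g_i^\omega$ has a mixed orbit it suffices for $\langle g_i^\omega, t^2\rangle$ to contain $\Alt(2\Z+1)$ (equivalently $\Alt(2\Z)$, by symmetry), since then the group contains $\Alt(\Z)$ and hence equals $G_2$.

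First I would treat the easy degenerate cases: if $g_1 \in \{1\}$ or the supports are small, handle directly; and note that if $g_i$ already has an orbit meeting both parities we can keep it, otherwise $g_i$ is (after conjugation) a product of a permutation in $\Alt(2\Z)$ and one in $\Alt(2\Z+1)$, and we will introduce the needed mixing through the choice of $\omega$. The main work is then a two-step conjugation $\omega = \omega_2 \omega_1$: choose $\omega_1 \in \Alt(\Z)$ so that $g_1^{\omega_1}$ is in the normal form of Lemma \ref{32genG2} relative to $t^2$ — maximal support point even, the preceding point odd, minimal support point $0$, next point $2$ — so that the ``$\sigma_*(\sigma')^{-1}$'' trick extracts a $3$-cycle $(2z-1\;0\;2)$ or $(2z\;1\;3)$, which generates $G_2$ with $t^2$ by Lemma \ref{genG2lem}. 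Then, exactly as in Lemma \ref{genwith2elements}, observe that the construction only constrains $\omega_1$ on $\supp(g_1)$, so there is freedom to further conjugate by an $\omega_2$ supported far above $\supp(g_1^{\omega_1})$; use this freedom to move a chosen point of $\supp(g_2)$ that does not interact with $g_1$'s structure out to a position where the analogous normal-form argument applies to $g_2^\omega$ as well, with the relevant ``distance'' made coprime to whatever residue is forced by the placement relative to $g_1$ — this is the analogue of the coprime-prime $p$ argument in Lemma \ref{genwith2elements}. Where the supports of $g_1$ and $g_2$ genuinely overlap, split into the same subcases as in Lemma \ref{genwith2elements} (whether the $g_1$-orbit of a point of $\supp(g_1)\setminus\supp(g_2)$ meets $\supp(g_2)$), using that once one of the two points sits at the bottom of the combined support, the ``$(\alpha)g^\omega = \alpha+2$, $\alpha$ minimal'' configuration already forces generation with $t^2$ via Lemma \ref{genG2lem}.

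I expect the main obstacle to be the \emph{simultaneous} satisfaction of the normal-form and parity conditions: Lemma \ref{32genG2} demands rather rigid parity constraints on the extremal points of the support (the maximum even, the point below it odd, the minimum $0$, the next $2$), and we must meet these for $g_1^\omega$ while still having enough residual freedom in $\omega$ to meet them for $g_2^\omega$, all while keeping $\omega \in \Alt(\Z)$ (which, as in the earlier proofs, costs at most a composition with a transposition $(x\;x+1)$ at a harmless large $x$). The key point making this possible is that, as in Lemma \ref{genwith2elements}, the first conjugation only pins down $\omega$ on the finite set $\supp(g_1)$, and we can push the ``active'' portion of $g_2$ arbitrarily high and choose the resulting translation-distances to be coprime; combined with Lemma \ref{evensandodds} absorbing any leftover even/odd mismatch, this should close the argument without further difficulty.
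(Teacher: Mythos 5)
Your overall strategy matches the paper's: relabel $\Z$ so the problem becomes finding $\omega$ with $\langle g_1^\omega,t^2\rangle=\langle g_2^\omega,t^2\rangle=G_2$, use the partition of $\Z$ into the two $t^2$-orbits together with Lemma \ref{evensandodds} to reduce to generating $\Alt(2\Z+1)$, and extract a $3$-cycle from each $g_i^\omega$ by the $\sigma_*(\sigma')^{-1}$ trick of Lemma \ref{32genG2}, feeding the result into Lemma \ref{genG2lem} or Lemma \ref{3cyclecondition}. Up to that point the proposal is sound and is essentially the paper's argument.

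The gap is in how you handle $g_2$ when the supports overlap. You propose to import the mechanism of Lemma \ref{genwith2elements}: find a point of $\supp(g_2)$ ``that does not interact with $g_1$'s structure,'' push it far away, and arrange a coprime translation distance. But the paper explicitly abandons this route for $G_2$ because the conditions for a $3$-cycle to generate $G_2$ with $t^2$ are more restrictive, and, more concretely, the mechanism breaks when $\supp(g_2)$ is small and is largely absorbed by the points whose images are already pinned down by $g_1$'s normal form (the common point $c\mapsto 0$ and the pair $x\mapsto 1$, $y\mapsto 3$ with $(x)g_1=y$). In that situation there need not exist two ``free'' points of $\supp(g_2)$ lying on a common $g_2$-orbit, so there is nothing left to move far away and no distance to make coprime. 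The paper's proof has to enumerate these residual configurations explicitly --- $g_2^\omega$ or its inverse equal to $(i\;0\;1)$, or to $(i\;0)(j\;1)$, or to one of three shapes involving both $1$ and $3$ --- and each is resolved by a different device: a direct appeal to Lemma \ref{genG2lem}, a bespoke commutator identity producing $(-1\;1\;3)$ from $(i\;0)(j\;1)$ (not an instance of the generic $3$-cycle extraction), or the observation that the configuration forces $g_2$ to be an odd permutation and hence cannot occur for $g_2\in\Alt(\Z)$. Your sketch does not identify that these exceptional cases exist, and the generic argument you describe fails on exactly them; closing the proof requires this finite case analysis rather than the coprime-prime argument.
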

\begin{proof} Again, by relabelling $\Z$, we deal with the equivalent problem of whether there exists $\omega \in \Alt(\Z)$ such that $\langle g_1^{\omega}, t^2\rangle =\langle g_2^{\omega}, t^2\rangle=G_2$. If $\supp(g_1)\cap\supp(g_2)=\emptyset$, then Lemma \ref{32genG2} implies the result. Otherwise, fix some $c\in \supp(g_1)\cap\supp(g_2)$. We will restrict ourselves to those $\omega\in \Alt(\Z)$ such that:
\begin{itemize}
\item $(c)\omega=0$;
\item $(\supp(g_1^{\omega})\cup\supp(g_2^{\omega}))\cap2\Z=\{0\}$; and
\item the only positive odd integers in $\supp(g_1^{\omega})\cup\supp(g_2^{\omega})$ are $1$ and $3$.
\end{itemize}
In light of the previous lemma, our aim is to choose $\omega$ so to relabel the points in $\supp(g_1)\cup\supp(g_2)$ so that both $\langle g_1^{\omega}, t^2\rangle$ and $\langle g_1^{\omega}, t^2\rangle$ contain $\Alt(2\Z+1)$.

Note that $|\supp(g_1)|\ge 3$. Hence there exist distinct $x, y$, both not equal to $c$, with $(x)g_1=y$. We now further restrict our choice of $\omega$ to those such that:
\begin{itemize}
\item $(x)\omega=1$; and
\item $(y)\omega=3$.
\end{itemize}
To simplify the cases we consider later, we impose that if $y\in \supp(g_2)$, then so is $x$. This is not a problematic choice: if only $y$ were in $\supp(g_2)$, then we could replace $g_1$ with $g_1^{-1}$ and then swap the labels for $x$ and $y$. We first note that any choice of $\omega\in \Alt(\Z)$ with the above properties gives $\Alt(2\Z+1)\le \langle  g_1^{\omega}, t^2\rangle$. We prove this in two parts. If $|\supp(g_1)|=3$, then we apply Lemma \ref{genG2lem} to obtain that $\langle  g_1^{\omega}, t^2\rangle=G_2$. Otherwise we apply the following steps, which we have employed numerous times in this paper so far (the first time being in Lemma \ref{32genG1}).
\begin{enumerate}[1.]
\item Let $a:=\min(\supp(g_1^\omega)\setminus\{0, 1, 3\})$. From our restrictions on $\omega$, $a$ is odd.
\item Conjugate $g_1^\omega$ by a power of $t^2$ to obtain $\alpha$, an element with $\min(\supp(\alpha))=3$.
\item Conjugate $g_1^\omega$ by $\alpha$ to obtain $g_1'$. Note, for every $z \in\Z\setminus\{1, 3, (3)g_1^\omega\}$, that $(z)g_1^\omega=(z)g_1'$.
\item Let $g_1^*:=g_1^\omega (g_1')^{-1}$. Then $g_1^*=(1\;3\;m)$, where $m=(3)\alpha$.
\end{enumerate}
This element $g_1^*$ is exactly what we need. If $(3)\alpha\in 2\Z$ then $\langle g_1^*, t^2\rangle =G_2$ by Lemma \ref{genG2lem}. If $(3)\alpha\in 2\Z+1$ then, by focusing on the action of $t^2$ and $(1\;3\;m)$ on $2\Z+1$, we can apply Lemma \ref{3cyclecondition} to see  that $\Alt(2\Z+1)\le \langle g_1^*, t^2\rangle$.

We must now choose the image of $\omega$ on $\supp(g_2)\setminus\{c, x, y\}$ so to guarantee that $\Alt(2\Z+1)\le \langle  g_2^{\omega}, t^2\rangle$. First, if $1$ and $3$ lie on the same orbit of $g_2^{\omega}$, then there exists $f_2$, a suitable power of $g_2^\omega$, such that $(1)f_2=3$. The steps given above then imply that $\langle f_2, t^2\rangle=G_2$ for any choice of $\omega$ with the properties we have specified above. Similarly if there exist $a, b\in \supp(g_2)\setminus\{c, x, y\}$ which lie on the same orbit of $g_2$, then we can produce $h_2$, a suitable power of $g_2$ such that $(a)h_2=b$. We can insist that $\omega$ preserves the ordering on $\supp(h_2)\setminus\{c, x, y, a, b\}$, and let $q:=\max(\supp(h_2))$. By choosing $\omega$ so that:
\begin{itemize}
\item $a$ is odd;
\item $a$ is minimal in $\supp(g_2^\omega)$; and
\item $b=a+2$
\end{itemize}
we see that the steps (1)-(4) above yield the $3$-cycle $((1)h_2^{-1}\;q\;q+2)$. As we saw for $g_1^*$, Lemma \ref{genG2lem} and Lemma \ref{3cyclecondition} deal respectively with the cases that $(1)h_2^{-1}$ is even or odd. We now note that if $1$ and $3$ lie on distinct $g_2$ orbits and there are no such $a, b\in \supp(g_2)\setminus\{c, x, y\}$ which lie on the same orbit of $g_2$, then we have one of a finite list of cases. We end by dealing with each such possibility.

If $x, y\not\in\supp(g_2)$, then $|\supp(g_2)|\ge 3$ and so there exist $a, b\in \supp(g_2)\setminus\{c\}$ that lie on the same orbit of $g_2$. If $x\in\supp(g_2)$ but $y\not\in\supp(g_2)$, then either:
\begin{itemize}
\item $g_2^\omega$ or its inverse equals $(i\;0\;1)$ for some $i\in\Z\setminus\{0, 1\}$, in which case, by choosing the appropriate $\omega$, we can set $i:=-1$ so that $\langle g_2^\omega, t^2\rangle=G_2$; or
\item $g_2^\omega=(i\;0)(j\;1)$ for some distinct $i, j\in \Z\setminus\{0, 1\}$. We set $i:=-5$ and $j:=-1$ and note that $\langle g_2^\omega, t^2\rangle$ contains $g_2^\omega((t^{-2}g_2^\omega t^2)^{-1}g_2^\omega(t^{-2}g_2^\omega t^2))=(-1\;1\;3)$. We can then apply Lemma \ref{3cyclecondition} to the orbit of $t^2$ containing $2\Z+1$ in order to generate $\Alt(2\Z+1)$, and then apply Lemma \ref{evensandodds} for our desired conclusion that $\langle g_2^\omega, t^2\rangle=G_2$.
\end{itemize}
We commented above that we could assume that if $y\in \supp(g_2)$, then $x$ must also be in $\supp(g_2)$. So finally, if $x, y \in \supp(g_2)$, then $g_2^\omega$ or its inverse is either $(i\;1\;0)(j\;3)$, $(i\;0\;3)(j\;1)$, or $(i\;0)(j\;1)(k\;3)$. But then $g_2\not\in\Alt(\Z)$, since these are all odd permutations. 
\end{proof}
We are now equipped to show that $u(G_2)\ge 2$, which we approach in a similar way to the proof of Lemma \ref{spreadG1lb}.
\begin{lem} The group $G_2$ has uniform spread at least 2.
\end{lem}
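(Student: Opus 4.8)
The plan is to mirror the proof of Lemma~\ref{spreadG1lb} step for step, now that the $G_2$-analogues of its three ingredients are available. Let $g_1,g_2\in G_2$. I aim to produce $\alpha\in\Alt(\Z)$ such that $\langle s_\alpha^2,g_1\rangle=\langle s_\alpha^2,g_2\rangle=G_2$; since $\alpha\in\Alt(\Z)\le G_2$, conjugation by $\alpha$ keeps $s_\alpha^2=\alpha^{-1}t^2\alpha$ inside the conjugacy class $[t^2]$ of $G_2$, so such an $\alpha$ witnesses $u(G_2)\ge 2$. The argument splits into three cases according to how many of $g_1,g_2$ lie in $\Alt(\Z)$.

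If $g_1,g_2\in\Alt(\Z)$, the preceding proposition already supplies an element of $[t^2]$ generating $G_2$ with each of them. If $g_1,g_2\in G_2\setminus\Alt(\Z)$, then Theorem~\ref{almostinfinitespread}, with $k=2$ and $n=2$, directly yields an $h\in[t^2]$ with $\langle g_1,h\rangle=\langle g_2,h\rangle=G_2$; inspecting its proof, $h$ has the form $s_\sigma^2$ with $\sigma\in\Alt(\Z)$. The only remaining case is, without loss of generality, $g_1\in\Alt(\Z)$ and $g_2\in G_2\setminus\Alt(\Z)$.

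In that case I would first apply Lemma~\ref{32genG2} to $g_1$ to obtain $\omega\in\FSym(\Z)$ with $\langle s_\omega^2,g_1\rangle=G_2$, together with its robustness clause: $\langle s_{\tau\omega}^2,g_1\rangle=\langle s_{\omega\tau}^2,g_1\rangle=G_2$ for every $\tau\in\FSym(\Z)$ whose support is disjoint from $\supp(\omega)\cup\supp(g_1)$. Then Theorem~\ref{almostinfinitespread} and Remark~\ref{avoiding}, applied to the single element $g_2$, together ensure the existence of an element $\alpha=\sigma\omega\in\Alt(\Z)$, with $\supp(\sigma)$ disjoint from $\supp(\omega)\cup\supp(g_1)$, such that $s_\alpha^2$ generates $G_2$ with $g_2$. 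The disjointness then lets the robustness clause of Lemma~\ref{32genG2} give $\langle s_\alpha^2,g_1\rangle=G_2$ as well, closing the last case.

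No step here is genuinely hard: all the substantive work sits inside Theorem~\ref{almostinfinitespread}, Lemma~\ref{32genG2}, Remark~\ref{avoiding}, and the preceding proposition. The only things to track are the support bookkeeping --- pushing $\supp(\sigma)$ far enough along $\Z$ to be simultaneously disjoint from $\supp(\omega)$ and $\supp(g_1)$, which is precisely the freedom recorded in the first sentence of Remark~\ref{avoiding} --- and keeping $\alpha$ inside $\Alt(\Z)$ (adjusting $\sigma$ by a far-off transposition if necessary, exactly as in the proof of Theorem~\ref{almostinfinitespread}) so that $s_\alpha^2$ represents the class $[t^2]$ and not a neighbouring one. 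I expect the write-up to be routine once these are in place.
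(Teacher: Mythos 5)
Your proposal is correct and follows the paper's own argument essentially verbatim: the same three-way case split, with the mixed case handled by combining Lemma \ref{32genG2} with Theorem \ref{almostinfinitespread} and Remark \ref{avoiding} exactly as in the proof of Lemma \ref{spreadG1lb}, and the remaining cases delegated to Theorem \ref{almostinfinitespread} and the preceding proposition. Your added bookkeeping about keeping $\alpha$ in $\Alt(\Z)$ so that $s_\alpha^2\in[t^2]$ is a sensible (and correct) clarification.
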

\begin{proof} Let $g_1, g_2 \in G_1$. We will show that there exists $\alpha \in \FSym(\Z)$ such that $\langle s_\alpha^2, g_1\rangle = \langle s_\alpha^2, g_2\rangle=G_2$. If $g_1, g_2 \in G_2\setminus\Alt(\Z)$, then Theorem \ref{almostinfinitespread} implies the result. If $g_1 \in \Alt(\Z)$ and $g_2\in G_2\setminus\Alt(\Z)$, then we can apply Lemma \ref{32genG2} in the same way to how Lemma \ref{32genG1} is used in the proof of Lemma \ref{spreadG1lb}. The final case was dealt with in the previous proposition.
\end{proof}
In light of Theorem  \ref{almostinfinitespread}, we consider a finite collection of elements in $\Alt(\Z)$ to obtain upper bounds on the spread of $G_1$ and $G_2$. I thank François Le Maître for his improvements to the following argument.
\begin{lem}\label{lemG1ub} The group $G_1$ has spread at most 9.
\end{lem}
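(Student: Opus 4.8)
The plan is to exhibit ten non-trivial elements $g_1,\dots ,g_{10}\in G_1$ with no common generating mate; this gives $s(G_1)\le 9$ directly from the definition of spread. We will use two facts throughout: by Lemma \ref{cyclicquotients}, for $g,h\in G_1$ one has $\langle g,h\rangle=G_1$ exactly when $\langle g,h\rangle$ surjects onto $G_1/\Alt(\Z)\cong\langle t\rangle$ \emph{and} contains $\Alt(\Z)$; and, by the classical fact that a primitive subgroup of $\Sym(\Z)$ containing a non-trivial finitary permutation contains $\Alt(\Z)$, the second condition is, apart from a degenerate torsion-free case, equivalent to $\langle g,h\rangle$ being primitive.

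First I would pin down the shape of a hypothetical common mate $h$. Take $g_1=(1\;2\;3)$, a second disjoint $3$-cycle $g_2$, and $g_3=t^2$. If $h\in\Alt(\Z)$ then $\langle g_1,h\rangle$ is finite, so $h=\omega t^n$ with $\omega\in\Alt(\Z)$ and $n\neq 0$, and by Lemma \ref{orbits} and Remark \ref{orbitobs} the group $\langle h\rangle$ has exactly $|n|$ infinite orbits together with finitely many finite orbits, on which $h$ is eventually a translation. For $\langle g_1,h\rangle$ to be transitive every orbit of $\langle h\rangle$ must meet $\{1,2,3\}$, so $|n|\le 3$; disjointness of $g_2$ lets one discard the finite orbits by the same orbit-counting. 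If $|n|=3$, then $g_1$ meets each of the three infinite orbits in a single point and the $h^m$-conjugates of $g_1$ are pairwise disjoint, so $\langle g_1,h\rangle\cong C_3\wr\Z$ is imprimitive --- exactly the mechanism behind $s(G_3)=0$. Hence $|n|\le 2$, and $g_3=t^2$ excludes $|n|=2$ because the image of $\langle g_3,h\rangle$ in $\langle t\rangle$ would be only $\langle t^2\rangle$. So $n=\pm 1$; replacing $t$ by $t^{-1}$ throughout (and the $g_i$ accordingly) we may assume $n=1$, i.e.\ $h=\omega t$ with $\omega\in\Alt(\Z)$ and a single infinite orbit, necessarily equal to $\Z$.

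Next I would pass to coordinates adapted to $h$: relabel $\Z$ by the orbit enumeration $k\mapsto (x_0)h^{k}$ of $h$, which is a bijection of $\Z$ equal to a power of $t$ composed with a finitary permutation and which conjugates $h$ to $t$, putting us back in the setting of Lemmas \ref{orbits} and \ref{3cyclecondition}. Under this relabelling a $3$-cycle $g_i$ is carried to a $3$-cycle whose two consecutive gaps, by Lemma \ref{3cyclecondition}, must be coprime whenever $\langle g_i,h\rangle=G_1$; for any non-$3$-cycle among the $g_i$ the corresponding criterion is that the relabelled element must not preserve a non-trivial congruence partition of $\Z$. The remaining seven elements $g_4,\dots ,g_{10}$ are then chosen so that no relabelling of the above form can meet all of these criteria simultaneously: equivalently, every $\omega\in\Alt(\Z)$ for which $\omega t$ has a single orbit leaves at least one $g_i$ with non-coprime gaps (i.e.\ with $\langle g_i,\omega t\rangle$ imprimitive). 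This choice, together with the resulting value $9$ in place of the cruder $34$, is the contribution of F.\ Le Maître.

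The hard part is precisely the construction of $g_4,\dots ,g_{10}$. The finitary part of the relabelling is finite but of unbounded size, so far-apart ``spread-$2$'' triples are useless --- a single $\omega$ with small support can be tailored to re-gap all of them coprimely at once. One therefore needs elements whose failure conditions genuinely interlock, so that repairing the condition for one $g_i$ forces a common divisor, equivalently a block structure, back onto another; exhibiting such a family of only seven elements, and double-checking that the cases $|n|=2$ and $|n|=3$ really are disposed of in the first step, is where the substance of the proof lies.
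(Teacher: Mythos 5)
Your outline correctly identifies the overall strategy (exhibit ten non-trivial elements with no common generating mate, reduce to $h=\omega t^{\pm1}$, relabel $\Z$ along the orbit of $h$, and invoke the coprimality criterion of Lemma \ref{3cyclecondition}), but it stops exactly where the proof has to start: the ten elements are never constructed. You write that $g_4,\dots,g_{10}$ are ``chosen so that no relabelling of the above form can meet all of these criteria simultaneously'' and then concede that exhibiting such a family ``is where the substance of the proof lies''. That concession is accurate --- without the construction there is no proof. Two smaller points: your reduction to $n=\pm1$ is more involved than necessary, since $(1\;2\;3)\in\Alt(\Z)$ forces the image of $\langle(1\;2\;3),\omega t^{n}\rangle$ in $G_1/\Alt(\Z)\cong\Z$ to be $n\Z$, which already gives $n=\pm1$ without $t^2$ or the $C_3\wr\Z$ analysis; and the case where $h=\omega t^{\pm1}$ has finite orbits, so that the orbit enumeration does not relabel all of $\Z$, is only gestured at via ``disjointness of $g_2$'' rather than argued.

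The missing construction is in fact simple, and it directly answers your worry that far-apart triples are useless: instead of spreading the supports out, concentrate them. Take $S$ to be the ten $3$-cycles (one from each inverse pair) supported inside $\{1,\dots,5\}$. If $h=\omega t^{\pm1}$ has its infinite orbit equal to all of $\Z$, relabel so that $h$ becomes $t$; among the five relabelled points, three share a parity, and the $3$-cycle on those three points has both of its gaps even, so $\gcd(d_a,d_b)\ge 2$ and Lemma \ref{3cyclecondition} rules it out. If instead the infinite orbit of $h$ is a proper subset of $\Z$, then at least three of the five points lie on the infinite orbit or at least three lie on finite orbits, and in either case the $3$-cycle on those three points generates an intransitive subgroup with $h$. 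The interlocking you were looking for is thus achieved purely by pigeonhole on five points, not by any delicate arithmetic coupling between the chosen elements.
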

\begin{proof} We will construct a set $S\subset G_1$, of size 10, such that no $h\in G_1$ generates $G_1$ with every element in $S$. The set will consist of $3$-cycles in $\Alt(\Z)$ with support contained within $X=\{1, \ldots, 5\}$ and chosen such that if $\sigma\in S$ then $\sigma^{-1}\not\in S$. There are $\binom{5}{3}=10$ such cycles. Let $h\in G_1$. Then $h=\sigma t^k$ for some $\sigma \in \Alt(\Z)$ and $k\in \Z$. If $k\not\in\{1, -1\}$, then $h$ cannot generate $G_1$ with $(1\;2\;3)$.

We start with the observation that, given 5 points in $\Z$, they cannot all lie on the infinite orbit of $h$. To do this we use Lemma \ref{3cyclecondition}. Assume that $h$ consists of a single infinite orbit, equal to $\Z$. Without loss of generality, reorder $\Z$ so to replace $h$ with $t$, the translation map. Now, given $\{a, b, c, d, e\}\subset\Z$ with $a<b<c<d<e$, at least 3 of these points lie in $2\Z$ or in $2\Z+1$. Let $\sigma$ denote the 3-cycle with support equal to 3 such points. Lemma \ref{3cyclecondition} then states that $\langle \sigma, t\rangle\ne G_1$.

From the previous paragraph, no $h$ with an infinite orbit equal to $\Z$ can generate $G_1$ with each element from $S$. Hence at least one of $\{1, \ldots, 5\}$ must lie on a finite orbit of $h$. Let $K$ be the intersection of the infinite orbit of $h$ with $\{1, \ldots, 5\}$, and let $L:=\{1, \ldots, 5\}\setminus K$. One of $K$ or $L$ must have size 3, and so contains $m_1<m_2<m_3$ from $\{1, \ldots, 5\}$. Let $\omega=(m_1\;m_2\;m_3)$. Then $\langle h, \omega\rangle\ne G_1$ since the group generated by $h$ and $\omega$ does not act transitively on $\Z$.
\end{proof}

\begin{lem} The group $G_2$ has spread at most 9.
\end{lem}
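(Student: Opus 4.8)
The plan is to follow the strategy of Lemma~\ref{lemG1ub}: I will produce a set $S\subset G_2$ of size $10$ such that no $h\in G_2$ generates $G_2$ together with every element of $S$, which forces $s(G_2)\le 9$. As there, $S$ will consist of $3$-cycles supported inside $X=\{1,\ldots,5\}$, taking one $3$-cycle from each of the $\binom{5}{3}=10$ pairs $\{\tau,\tau^{-1}\}$ (so $\sigma\in S$ implies $\sigma^{-1}\notin S$). I would note in passing that five points really are needed rather than four: by Lemma~\ref{genG2lem} a $3$-cycle on two even points and one odd point can generate $G_2$ with $t^2$, so the argument below would not close with only four.

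First I would dispose of the trivial $h$: writing $h=\omega t^{2n}$ with $\omega\in\Alt(\Z)$, if $n\ne\pm1$ then $\langle\tau,h\rangle$ maps onto a proper subgroup of $G_2/\Alt(\Z)\cong\Z$ for every $3$-cycle $\tau$ (and $\langle\tau,h\rangle$ is finite when $n=0$), so every element of $S$ fails with $h$. Thus I may assume $h=\omega t^{\pm2}$, and then I would invoke Lemma~\ref{orbits} and Remark~\ref{orbitobs} with $c=\pm2$ (so $d_1=d_2=2$) to record the key structural fact: such an $h$ has exactly two infinite orbits $\mathcal{O}_1,\mathcal{O}_2$ (one eventually consisting of even integers, one eventually of odd integers, in each direction) together with a finite set $Y$ of points lying on finite orbits. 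Write $N$ for the total number of $h$-orbits, so $N=2+(\text{number of finite orbits})\ge2$.

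The heart of the argument is the observation already used in the $G_3$ computation: if a $3$-cycle $\tau$ generates $G_2$ with $h$ then $\langle\tau,h\rangle$ must act transitively on $\Z$ (it contains $\Alt(\Z)$), and the orbits of $\langle\tau,h\rangle$ are unions of $h$-orbits, two being merged only when $\supp(\tau)$ meets both; since $|\supp(\tau)|=3$, at most three $h$-orbits get merged, dropping the orbit count by at most $2$. Hence transitivity forces $N\le3$ and forces $\supp(\tau)$ to meet every $h$-orbit. I would then split on $N$: if $N\ge4$ every $3$-cycle fails at once, so all of $S$ fails; if $N=3$, with orbits $\mathcal{O}_1,\mathcal{O}_2,F$ and $a,b,c$ the numbers of points of $X$ in each ($a+b+c=5$), only the $abc$ "transversal" $3$-subsets can possibly generate, and since $abc\le4$ whenever $a+b+c=5$, at least $10-abc\ge6$ members of $S$ fail; and if $N=2$ then $\mathcal{O}_1\cup\mathcal{O}_2=\Z$, and after relabelling $\Z$ so that $h=t^2$ with $\mathcal{O}_1=2\Z$, $\mathcal{O}_2=2\Z+1$, the five points of $X$ split between the two classes as $a+b=5$ with $\max(a,b)\ge3$, so some $3$-subset lies entirely in one class and its $3$-cycle together with $t^2$ is visibly intransitive, hence not $G_2$. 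In every case some element of $S$ fails with $h$.

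I expect the only genuinely delicate step to be the structural claim that $h=\omega t^{\pm2}$ has exactly two infinite orbits; everything after that is pigeonhole. But this follows directly from the bookkeeping in the proof of Lemma~\ref{orbits} (four "rays" at infinity, with each infinite orbit carrying exactly two of them). I would also remark that, in contrast to the $G_1$ case, Lemma~\ref{3cyclecondition} is not needed here at all: because $h=\omega t^{\pm2}$ can never be a single $\Z$-orbit, intransitivity is the only obstruction one ever has to exploit.
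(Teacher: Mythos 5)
Your argument is correct and is essentially the paper's own proof: the same set $S$ of ten $3$-cycles supported in $\{1,\ldots,5\}$, the same reduction to $h=\omega t^{\pm 2}$, and the same pigeonhole on how the five points distribute among the two infinite orbits and the finite orbits of $h$ (the paper packages this as a map $\phi_h:X\to\{0,1,2\}$ rather than a case split on the total orbit count $N$, but the content is identical). Your added justifications --- that $h$ has exactly two infinite orbits, and that transitivity forces the support of a $3$-cycle to meet every $h$-orbit --- are correct and merely make explicit what the paper leaves implicit.
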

\begin{proof} As with the previous proof, let $S$ consist of those $3$-cycles in $\Alt(\Z)$ with support contained within $X=\{1, \ldots, 5\}$ and chosen such that if $\sigma\in S$ then $\sigma^{-1}\not\in S$. We will show that no $h\in G_2$ generates $G_2$ with every element in $S$. Given $h\in G_2$, we have that $h=\sigma t^{2k}$ where $\sigma \in \Alt(\Z)$ and $k \in \Z$. If $k\not\in\{1, -1\}$, then $h$ cannot generate $G_2$ with $(1\;2\;3)$. Let $h$ have infinite orbits $\mathcal{O}_1$ and $\mathcal{O}_2$ and define $\phi_h: X\rightarrow \{0, 1, 2\}$ by
\begin{itemize}
\item $0$ if $x$ lies on a finite orbit of $h$;
\item $1$ if $x\in\mathcal{O}_1$; and
\item $2$ if $x\in\mathcal{O}_2$.
\end{itemize}
It is clear that the image of $\phi$ must contain $\{1, 2\}$, since otherwise $\langle h, (a\;b\;c)\rangle$ cannot act transitively on both $\mathcal{O}_1$ and $\mathcal{O}_2$ for any distinct $a, b, c\in X$. Similarly, we cannot have two points of $X$ sent to 0 by $\phi$, and we cannot have 3 points of $X$ sent to either 1 or to 2. Thus, after relabelling, the only possibility is that $\phi(1)=0$, $\phi(2)=\phi(3)=1$, and $\phi(4)=\phi(5)=2$. But then $\langle h, (1\;2\;3)\rangle\ne G_2$. 
\end{proof}
\newpage
\bibliographystyle{amsalpha}

\begin{thebibliography}{10}
\bibitem{ConjHou}
Y.~Antol{\'{\i}}n, J.~Burillo, and A.~Martino, \emph{Conjugacy in {H}oughton's
  groups}, Publ. Mat. \textbf{59} (2015), no.~1, 3--16
  
\bibitem{spread}
J. L. Brenner and J. Wiegold, \emph{Two generator groups, I}, Michigan Math. J. {\bf 22} (1975), 53--64.  

\bibitem{spread2}
J. L. Brenner and J. Wiegold, \emph{Two generator groups, {II}}, Bull. Austral. Math. Soc. {\bf 22} 1980, 113--124.

\bibitem{nospread1}
T.~Breuer, R.~M.~Guralnick and W.~M.~Kantor, \emph{Probabilistic generation of finite simple
groups, II}, J. Algebra {\bf 320} (2008), 443--494. 
   
 \bibitem{Hou2}
J.~Burillo, S.~Cleary, A.~Martino, and C.~E.~R{\"o}ver, \emph{Commensurations
  and {M}etric {P}roperties of {H}oughton's {G}roups}, Pacific J. Math., \textbf{285}(2) (2016),  289--301

 \bibitem{F}
T.C.~Burness and S.~Guest, \emph{On the uniform spread of almost simple linear groups}, Nagoya Math. J. {\bf 109} (2013), 35--109.

 \bibitem{L}
T.C.~Burness, S.~Harper, \emph{On the uniform domination number of a finite simple group}, Trans. Amer. Math. Soc. {\bf 372} (2019), 545--583

 \bibitem{H}
T.C.~Burness, S.~Harper, \emph{Finite groups, 2-generation and the uniform domination number}, Israel J. Math. {\bf 239} (2020), 271--367

 \bibitem{J}
T.C.~Burness, R.M.~Guralnick, S.~Harper, \emph{The spread of a finite group}, Ann. of Math. (2) {\bf 193} (2021), 619--687
  
\bibitem{Cox1} C.~G.~Cox, \emph{Twisted {C}onjugacy in {H}oughton's groups}, J. Algebra, \textbf{490} (2017), 390--436

\bibitem{Cox2} C.G.~Cox, \emph{A note on the $R_\infty$ property for groups $\FAlt(X)\le G\le \Sym(X)$}, Comm. in Alg. {\bf 47(3)} (2019), 978--989

\bibitem{invgenhou} C.G.~Cox, \emph{Invariable generation and the Houghton groups}, arXiv:2007.01626

\bibitem{first} C.~Donoven and S.~Harper, \emph{Infinite {$\frac 32$}-generated groups}, Bull. Lond. Math. Soc. {\bf 52(4)} (2020), 657--673

\bibitem{sapir} V.~Guba and M.~Sapir, \emph{On the conjugacy growth functions of groups}, Illinois J. Math., {\bf 54} (2010), 301--313

\bibitem{guba} V.~Guba, \emph{A finite generated simple group with free 2-generated subgroups}, Sibirsk. Mat. Zh. {\bf 27} (1986), 50–67.

 \bibitem{C}
R.M. Guralnick and W.M. Kantor, \emph{Probabilistic generation of finite simple groups}, J. Algebra {\bf 234}
(2000), 743–792.

 \bibitem{D}
R.M. Guralnick and A. Shalev, \emph{On the spread of finite simple groups}, Combinatorica {\bf 23} (2003), 73–87.

 \bibitem{G}
S. Harper, \emph{On the uniform spread of almost simple symplectic and orthogonal groups}, J. Algebra {\bf 490} (2017), 330--371.

 \bibitem{I}
S. Harper, \emph{The Spread of Almost Simple Classical Groups}, Lecture Notes in Mathematics, Springer, Cham {\bf 2286} ISBN 978-3-030-74100-6 (2021).



\end{thebibliography}
\def\cprime{$'$}
\providecommand{\bysame}{\leavevmode\hbox to3em{\hrulefill}\thinspace}
\providecommand{\MR}{\relax\ifhmode\unskip\space\fi MR }
\providecommand{\MRhref}[2]{%
 \href{http://www.ams.org/mathscinet-getitem?mr=#1}{#2}
}
\providecommand{\href}[2]{#2}

\end{document}